		\def\version{19 January, 2023}				       %
\font\tenBbb=msbm10 
\font\sevenBbb=msbm7 
\font\fiveBbb=msbm5 
\newtheorem{theorem}{Theorem}[section] 
\newtheorem{lemma}[theorem]{Lemma} 
\newtheorem{cor}[theorem]  {Corollary} 
\newtheorem{remark}[theorem]  {Remark}
\theoremstyle{definition}
\newcommand{\E}{\mathbb E}
\newcommand{\R}{\mathbb R}
\newcommand{\N}{\mathbb N}
\renewcommand{\P}{\mathbb P}
\newcommand{\smfrac}[2]{\textstyle{\frac {#1}{#2}}}
\def\1{{\mathchoice {1\mskip-4mu\mathrm l}      
{1\mskip-4mu\mathrm l} 
{1\mskip-4.5mu\mathrm l} {1\mskip-5mu\mathrm l}}} 
\newcommand{\ssup}[1] {{\scriptscriptstyle{({#1})}}}
\renewcommand{\qed}{\hfill\ensuremath{\square}}
\renewcommand{\d}{{\rm d}} 
\newcommand{\eps}{\varepsilon} 
\newcommand{\Bin}{{\rm Bin}} 
\newcommand{\Poi}{{\rm Poi}} 
\newcommand{\id}{{\rm{id}}}
\newcommand{\Mcal}   {{\mathcal M }}
\newcommand{\Xcal}   {{\mathcal X }}
\newcommand{\e}   {{\operatorname e }}
\numberwithin{equation}{section}
\begin{document}
\title[Throughput in (slotted) ALOHA]{The throughput in multi-channel (slotted) ALOHA:\\\medskip  large deviations and analysis of bad events}
\author[Wolfgang K\"onig and Charles Kwofie]{}
\maketitle
\thispagestyle{empty}
\vspace{-0.5cm}

\centerline{\sc 
Wolfgang K\"onig\footnote{TU Berlin and WIAS Berlin, Mohrenstra{\ss}e 39, 10117 Berlin, Germany, {\tt koenig@wias-berlin.de}} 
and Charles Kwofie\footnote{University of Energy and Natural Resources, Department of Mathematics and Statistics, Post Office Box 214, Sunyani, Ghana,  {\tt charles.kwofie@uenr.edu.gh}}}
\renewcommand{\thefootnote}{}
\vspace{0.5cm}
\centerline{\textit{WIAS Berlin and TU Berlin, and University of Energy and Natural Resources, Sunyani }}

\bigskip

\centerline{\small(\version)} 
\vspace{.5cm}

\begin{abstract}
We consider ALOHA and slotted ALOHA protocols as medium access rules for a multi-channel message delivery system. Users decide randomly and independently with a minimal amount of knowledge about the system at random times to make a message emission attempt. We consider the two cases that the system has a fixed number of independent available channels, and that interference constraints make the delivery of too many messages at a time impossible. 

We derive probabilistic formulas for the most important quantities like the number of successfully delivered messages and the number of emission attempts, and we derive large-deviation principles for these quantities in the limit of many participants and many emission attempts. We analyse the rate functions and their minimizers and derive laws of large numbers for the throughput. We optimize it over the probability parameter. Furthermore, we are interested in questions like ``if the number of successfully delivered messages is significantly lower than the expectation, was the reason that too many or too few sending attempts were made?''. Our main tools are basic tools from probability and the theory of (the probabilities of) large deviations.
 \end{abstract}

\vspace{.2cm}
\bigskip\noindent
{\it MSC 2020.}  60F10, 60G50;

\medskip\noindent
{\it Keywords and phrases.} Communication networks, medium access, ALOHA, slotted ALOHA, optimizing throughput, large deviations.

\section{Introduction and main results}

\subsection{Introduction}\label{sec-Intro}

Protocols for medium access control (MAC) are fundamental and ubiquitous in any telecommunication system. Here we are particularly interested in {\em multi-channel systems}, where a fixed number of channels is available. In order to keep the complexity of the algorithm of the channel choices by the transmitters low, we make a well-known probabilistic ansatz and assume that each transmitter chooses randomly and independently a channel for each transmission. This makes the system get along with a minimum of infrastructure, i.e, with a minimum knowledge about the occupancy of the channels. In other words, we consider an ALOHA-based multi-channel protocol, see \cite{RS90}. More specifically, we concentrate in this paper on {\em slotted ALOHA}, where message transmissions are possible only in specific micro time slots.

It is our purpose to study random events that comprise the transmission of many messages from many transmitters in a large number of (very short) time-slots, forming a fixed time interval, in the limit of many such slots. In each of the slots, each transmitter chooses with a certain probability, independently over all transmitters and over all slots, whether to make a transmission attempt in that slot or not. This probability must be very small, i.e., on the scale of the inverse of the number of transmitters. This leads to a huge number of random decisions that have to be drawn in each time slot,
with a tiny probability each, which leads to a huge amount of data with high imprecision. 

In this paper, we give a probabilistic analysis of the main quantities, like numbers of attempts and of successes, per micro time slot in the limit of many such time slots, coupled with many message emission attempts. In particular, we comprehensively characterize the main quality parameter, the {\em throughput}. We are going to find neat descriptions of the entire (joint) distributions of these quantities and of their limits. In particular, we introduce techniques from the probabilistic theory of (the probabilities of) large deviations. Using this theory, we analyse events that have a very low probability in this limit, like the event that the number of successes is significantly lower than its expectation. Furthermore, we give an explicit assessment of the most likely reason for this. In this way, we go far beyond calculating (limiting) expectations, but we handle the numbers of message attempts and transmission successes per slot as stochastic processes with a rich structure.

In our system, we have a fixed upper bound $\kappa$ for the number of messages that can be successfully delivered in a given micro time slot. Our main system parameter is the probability parameter $p$, the {\it medium access probability (MAP)}, with which each of the messages tries randomly to gain access to the system. If $p$ is too large, then it is likely that the system exceeds the upper bound $\kappa$, which results into failures of many message transmissions. On the other hand, if $p$ is too small, then a part of the possible capacity is not exhausted, and the system underachieves. One of our goals is to quantify an optimal choice of $p$. The main quantity for this criterion is the {\it throughput}, the number of successfully transmitted messages per time unit. But we analyse also other quantities like the number of message attempts.

In the {\em multi-channel (MC) models} that we consider in this paper, we assume a total interference isolation between the channels, i.e., we neglect possible interferences between them. Here each channel in a given micro time slot is able to successfully transmit one message, if no more than one emission attempt is made through this channel. The higher the number of emission attempts is, the higher is the number of sucesses (but also the number of unsuccessful messages, which we could also analyse with our ansatz, but abstained from); hence an optimization over the probability parameter is only of limited interest, unless there is a substantial price that is paid per unsuccessful transmission.

Closely related to multi-channel systems are systems with entirely unlimited interference between all of them. Here the success of the transmission of the messages is regulated by means of the {\em signal-to-interference ratio (SIR)}. In a simplified setting, the transmission of message $i$ in a given time slot is successful if and only if
$$
\frac{1}{\sum_{j\in I\setminus\{i\}} 1}\geq \tau,
$$
where $\tau\in(0, \infty)$ is a technical constant, and $I$ is the index set of messages that attempt to transmit in this slot (which depends on various quantities, like the number of message emission attempts in that slot, which may be random). Since we are working in a spaceless model, there is no distance and therefore no path-loss function involved, and we give the same signal strength power $1$ to each transmission attempt. Putting $\kappa =1+\lfloor \frac 1\tau\rfloor\in\N$, we see that any transmission attempt in the slot is successful if and only if no more than $ \kappa $ attempts are made in the slot; otherwise interference makes all these attempts unsuccessful. This is the second of the two model functionalities that we are going to study; we call it an {\em interference-based (IB)} model. Mathematically, it shows great similarities to multi-channel models, but the most important difference is that a high number of emission attempts leads to many unsuccessful attempts and is therefore working against a high throughput; hence an optimization over the probability parameter is of high interest and not an easy task.

While the derivation of the expected throughput in the multi-channel ALOHA model and its optimization over $p$ is easy (with the well-known result that the maximal throughput is equal to $\kappa/\e$ with $\kappa$ the number of channels), for the interference-based model, we can offer an explicit formula for the expectation, but only approximate characterisations of the maximization over $p$, which get sharp in the limit as $\kappa\to\infty$. 

We would like to point out
that, from a mathematical-practical point of view, it might have advantages to let each transmitter decide, for the entire time interval under consideration, whether or not an attempt is made during that interval, and then to randomly and uniformly  distribute the attempts over the time slots of this interval. We call the first mode of attempt decisions {\em local} and the latter {\em global}. We will be studying both in this paper, since we believe that both have their right and their advantages. On the level of expectations, there will be no difference noticeable between the main quantities of interest, but in the large-deviation behavior.

Summarizing, the main new contributions of the present paper are the following.

\begin{enumerate}
	\item[(a)] describing the relevant quantities in terms of their entire
	joint distribution (rather than only expectations),
	\item[(b)] describing limiting events of large deviations asymptotically
	in terms of explicit rate functions,
	\item[(c)] comparing local and global random assignments of transmission
	slots,
	\item[(d)] optimizing the throughput over the MAP for the
	interference-based model,
	\item[(e)] analysis of large deviation probabilities of conditional
	events (e.g., of a low number of successes).
\end{enumerate}

The remainder of this paper is organized as follows. We introduce our models in Section~\ref{sec-slottedALOHA} and the most important quantities and questions in Section~\ref{sec-Quantities}. Our results are presented and commented in Section~\ref{sec-Results}, and some comments on the literature are made in Section~\ref{sec-literature}. 
Section~\ref{sec-ProofsLDP} brings all the proofs of the large-deviation principles, and Section~\ref{sec-Ratefunctions} the proofs of the other results.

\subsection{Description of the models}\label{sec-slottedALOHA}

Let us introduce the models that we are going to analyse.  We  consider a reference time interval, which we pick as $[0,1]$. We have a large parameter $N\in\N$, which models a large number of network participants and a large number of time slots. The reference time interval is divided in to $N$ slots $[\frac{i-1}N,\frac iN)$ for $i\in[N]=\{1,\dots,N\}$; every message delivery starts at the beginning of one of these slots and terminates before its elapsure. With a fixed parameter $b\in(0,\infty)$, we assume that $bN$ participants (we waive the integer-part brackets) are in the system, i.e., at any time $bN$ devices would like to emit one message each. Access to the medium is under some random rule, for which we consider two variants, a rule that is {\it local in time} and one that is {\it global in time}; both have a parameter $p\in(0,\infty)$.

\medskip\noindent{\bf Access rules:}

\begin{itemize}
\item[(L)] Under the {\em local rule} each of the $bN$ participants chooses at any time slot randomly with probability $\frac pN$ to emit a message during this slot, independently over all $bN$ participants and all $N$ time slots.

\item [(G)] Under the {\em global rule}  each of the $bN$ participants chooses randomly with probability $p$ whether to emit a message during some of the $N$ time slots, and then all those who choose that option are randomly and uniformly distributed over the $N$ time slots.
\end{itemize}

Under Rule (G), any participant has only at most one chance during $[0,1]$, while under Rule (L), every message has an unbounded number of trials and can be successful several times uring $[0,1]$. Hence, under (G), $p$ needs to be in $(0,1]$, while under (L), it can be any positive number, assuming that $N$ is large (and we assume this). We assume that each participant has an unbounded number of packages to be sent, i.e., it makes successively an unbounded number of emission attempts. Rule (G) has a two-step random strategy, as first each message randomly decides whether to attempt a transmission, and then picks randomly a microscopic time slot. Here the number of random variables that need to be sampled is much smaller than under Rule (L), and the probability parameter is of finite order in $N$, in contrast to Rule (L). We therefore see substantial practical advantages in Rule (G) over Rule (L).

Now we describe the criteria for successful delivery of the messages that are choosen to be emitted under either Rule (L) or (G). We consider two scenarios, the {\it multi-channel scenario} and the  {\it interference-based scenario}; both come with a parameter $\kappa\in\N$: 

\medskip\noindent{\bf Success rules:}
\begin{itemize}
\item [(MC)] In the {\em multi-channel scenario}, the are $\kappa$ channels available, and in each slot each of the emission attempts choose randomly and uniformly one of the $ \kappa$ channels, independent over all the other participants and time slots. A transmission attempt is successful in this slot if no other participant chooses the channel that it picked. All other attempts are unsuccessful.

\item [(IB)] In the {\em interference-based scenario}, in any given time slot,  all transmission attempts are successful if their number does not exceed $\kappa$; otherwise all attempts in that slot are unsuccessful.

\end{itemize}

In the case of a successful attempt of transmission of a message, we say that the participant has gained access to the medium. As we explained in Section~\ref{sec-Intro}, Scenario (MC) describes slotted ALOHA with $\kappa$ channels and total absence of infrastructure, while Scenario (IB) describes the influence of interference constraints. Note that Model (B) in \cite{HLS12} is contained in Scenario (MC).

We are going to couple each of the two scenarios (MC) and (IB) with each of the two Rules (L) and (G) and obtain four different protocols. Scenario (MC), coupled with Rule (L), is equal to Model (B) in \cite{HLS12}.

\subsection{Quantities and questions of interest}\label{sec-Quantities}

There are three parameters in our simple models:
\begin{itemize}
\item[$\bullet$] $p\in(0,\infty)$ the emission attempt probability parameter,

\item[$\bullet$] $b\in(0,\infty)$ the rate of messages that would like to be transmittted during $[0,1]$,

\item[$\bullet$] $\kappa\in\N$ the threshold for the success criterion.
\end{itemize}

We consider $\kappa$ (given by technical conditions) and $b$ (given by the appearance of participants) as given quantities that cannot be controled. However, the parameter $p$ can be picked by the system operator and can be adapted to $b$ and $\kappa$; it is decisive for the success of the system. Part of our investigations will be on an optimal choice of $p$ given $\kappa$ and $b$.

The quantities that we are interested in are the following.
\begin{itemize}
\item $A_N=$ the number of message sending attempts,

\item $S_N=$ number of successfully sent messages,

\item (only for Secnario (IB)) $R_N=$  number of successful slots, that is, slots in which all messages are successfully transmitted.
\end{itemize}

These three quantities are defined on probability spaces whose probability measures are denoted by $\P^{\ssup N}_{\rm D,E}$ with ${\rm D}\in\{{\rm L,G}\}$ and ${\rm E}\in\{{\rm MC,  IB}\}$, respectively.

The most important quantity is the {\it throughput}, the number of succcessfully sent messages per time unit, which is equal to $S_N/N$ in our model. But we find it also important to consider the number of unsuccessful sending attempts, in order to be able to say something about the frustration of the participants of the system.

In both scenarios, in order to maximize the number of successes, one would like to pick the probability parameter $p$ in such a way that the expected number of transmission attempts per slot is close to $\kappa$, i.e., $p\approx \kappa /b$. However, if the number of attempts fluctuates upwards, then the success  is damaged, in (IB) even maximally damaged; hence the optimal choice of $p$ should be a bit lower. Part of our analysis is devoted to finding the optimal value of this parameter.

\subsection{Our results}\label{sec-Results}

In this section we describe and comment on our results: Section~\ref{sec-LDPs} on large-deviations, Section~\ref{sec-LLN} on laws of large numbers, Section~\ref{sec-optimalp} on the optimal choice of the probability parameter $p$, and Section~\ref{sec-successes} on the question where the event of having few successes  most likely comes from.

We denote the Poisson distribution with parameter $\alpha\in(0,\infty)$ on $\N_0$ by $\Poi_\alpha=(\e^{-\alpha}\frac{\alpha^k}{k!})_{k\in\N_0}$, and the binomial distribution on $\{0,1,\dots,N\}$ with parameters $N\in\N$ and $p\in(0,1)$ by $\Bin_{N,p}(k)=\binom Nk p^k(1-p)^{N-k}$. Furthermore, we denote the entropy of a probability measure $\mu$ on some discrete set $\mathcal X$ with respect to another one, $\nu$, by $H(\mu|\nu)=\sum_{k\in\Xcal}\mu_k\log\frac{\mu_k}{\nu_k}$. Recall that $\mu\mapsto H(\mu|\nu)$ is non-negative, strictly convex and is zero only for $\mu=\nu$. By $\Mcal_1(\mathcal X)$ we denote the set of probability measures on $\Xcal$.

\subsubsection{Large-deviation principles}\label{sec-LDPs}

Our first main result is on the asymptotics as $N\to\infty$ of the joint distribution of $(S_N,A_N,R_N)$, in the sense of a large-deviation principle. First we turn to (IB).

\begin{theorem}[LDP for $\frac 1N(A_N,S_N,R_N)$ for Scenario (IB)]\label{thm-LDPALOHA}
Fix the model parameters $b,p>0$ and $\kappa\in\N$, where we assume $p\leq 1$ for $\rm {D=G}$. Then for both ${\rm D}\in\{{\rm L,G}\}$, the tuple $\frac 1N(A_N,S_N,R_N)$ satisfies a large-deviation principle (LDP) under $\P^{\ssup N}_{{\rm D,IB}}$ with rate function given by
\begin{equation}\label{Idefmicro}
I_{\rm{L,IB}}(a,s,r)=\inf\Big\{H(\mu|{\rm Poi}_{bp})\colon\mu\in\Mcal_1(\N_0),\sum_{k\in\N_0} f(k)\mu_k=(a,s,r)\Big\}
\end{equation}
where $f(k)=(k, k\1\{k\le\kappa\},\1\{k\le\kappa\})$, while
\begin{equation}\label{Idef}
I_{\rm {G,IB}}(a,s,r)=I_{\rm {L,IB}}(a,s,r)+(b-a)\log\frac{1-\frac ab}{1-p}+a-bp.
\end{equation}
\end{theorem}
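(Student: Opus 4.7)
The plan is to establish the LDP for Rule (L) first, exploiting the independence of the per-slot counts, and then to deduce the Rule (G) version by an explicit change-of-measure argument that alters only the marginal law of $A_N$. Under the local rule, the per-slot attempt counts $X_1, \ldots, X_N$ are i.i.d.\ with common law $Q_N := \Bin_{bN, p/N}$, and one has the exact identity
$$
\tfrac{1}{N}(A_N, S_N, R_N) = \tfrac{1}{N}\sum_{i=1}^N f(X_i), \qquad f(k) := \bigl(k,\,k\1\{k\le\kappa\},\,\1\{k\le\kappa\}\bigr).
$$
I would first note that pointwise convergence $Q_N(k)\to Q_\infty(k) := \Poi_{bp}(k)$ together with the uniform Poisson tail $Q_N(k)\le C(bp)^k/k!$ yields $\log\E[\e^{\lambda\cdot f(X_1)}] \to \Lambda(\lambda) := \log\sum_k Q_\infty(k)\e^{\lambda\cdot f(k)}$ for every $\lambda\in\R^3$. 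Since $Q_\infty$ has moment generating function on the whole line, $\Lambda$ is finite, smooth and strictly convex on $\R^3$, and the multivariate G\"artner--Ellis theorem delivers an LDP for $\tfrac{1}{N}(A_N, S_N, R_N)$ at speed $N$ with rate $\Lambda^*$. The Donsker--Varadhan duality $\Lambda(\lambda) = \sup_\mu\bigl\{\lambda\cdot \int f\,d\mu - H(\mu|Q_\infty)\bigr\}$, with equality attained at the tilted measure $\mu_\lambda(k)\propto Q_\infty(k)\e^{\lambda\cdot f(k)}$, then identifies $\Lambda^*$ with the entropy infimum in \eqref{Idefmicro}.

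\textbf{Rule (G) by change of measure.} For the global rule I would factor the joint law of $(X_1, \ldots, X_N)$ under each protocol as the marginal of $A_N$ times the conditional given $A_N = A$. Under (G) the conditional is $\opn{Mult}(A; 1/N, \ldots, 1/N)$, while under (L) it is the multivariate hypergeometric distribution of $A$ successes among $bN^2$ trials grouped into $N$ blocks of size $bN$. A direct Stirling expansion shows that the point-ratio of these two conditional laws is $\e^{O(1)}$ uniformly over configurations of interest (those with $A = O(N)$ and finite empirical entropy relative to $Q_\infty$), hence negligible on the $N$-exponential scale. Consequently, the LDP rate functions differ only through the marginal of $A_N$: the Cram\'er rate for $\Bin_{bN^2, p/N}$ under (L) is $a\log(a/(bp)) - a + bp$, and for $\Bin_{bN, p}$ under (G) it is $a\log(a/(bp)) + (b-a)\log\tfrac{b-a}{b(1-p)}$. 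Subtracting and using the identity $(b-a)/(b(1-p)) = (1-a/b)/(1-p)$ produces exactly the extra term $(b-a)\log\tfrac{1-a/b}{1-p} + a - bp$ in \eqref{Idef}; a standard exponential change-of-measure (Varadhan-type) argument then transfers the LDP from (L) to (G).

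\textbf{Main obstacle.} The most delicate step is the uniform quantitative comparison between the multivariate hypergeometric and the multinomial conditional laws in Rule (G): the Stirling expansion needs to be controlled uniformly over slot configurations relevant to the LDP, and the resulting log-ratio must be shown to be $o(N)$ so that it does not leak into the rate function. The unboundedness of the first coordinate $f_1(k)=k$ is handled cleanly by the G\"artner--Ellis route, since $\Lambda$ is finite on all of $\R^3$, which bypasses the topological subtleties that would otherwise arise in a direct contraction of a Sanov-type LDP for the empirical measure $\tfrac{1}{N}\sum_i\delta_{X_i}$.
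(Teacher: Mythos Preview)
Your Rule (L) argument via the multivariate G\"artner--Ellis theorem is correct and is a genuine alternative to the paper's route. The paper first shows exponential equivalence of the per-slot counts $A_N^{\ssup j}\sim\Bin_{bN,p/N}$ with truly i.i.d.\ $\Poi_{bp}$ variables, then applies Sanov's theorem to their empirical measure and the contraction principle to the functional $\mu\mapsto\langle f,\mu\rangle$, with a separate cutting argument to cope with the unbounded first coordinate; it also sketches a second, Cram\'er-based proof yielding a Legendre-transform formula. Your approach is more economical: finiteness of $\Lambda$ on all of $\R^3$ (since $\Poi_{bp}$ has all exponential moments) simultaneously absorbs the binomial-to-Poisson replacement and the truncation, and essential smoothness of $\Lambda$ is automatic because its effective domain is the whole space. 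The Donsker--Varadhan identification of $\Lambda^*$ with the constrained entropy is standard; just note that the minimax step behind it deserves one line of justification.

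For Rule (G) the paper proceeds differently from you, and here you are in fact the more careful of the two. The paper asserts the \emph{exact} equality $\P_{\rm G,IB}^{\ssup N}(S_N=s,R_N=r\mid A_N=a)=\P_{\rm L,IB}^{\ssup N}(S_N=s,R_N=r\mid A_N=a)$ ``since the success rules are the same'', and then reads off \eqref{Idef} from the quotient of the two $A_N$-marginals. As you correctly observe, the conditional per-slot occupancy vector is multinomial under (G) but multivariate hypergeometric under (L), so this equality is not literal; only an $\e^{o(N)}$-comparison is available. Your proposed Stirling route is viable but is exactly as delicate as you flag: the log-ratio is governed by $\sum_j\binom{x_j}{2}/(bN)$, which is \emph{not} $O(1)$ uniformly over all configurations with $\sum_j x_j=O(N)$ (take one $x_j$ of order $N$), so you must restrict to configurations with bounded empirical second moment and separately argue that the complementary set carries super-exponentially small mass. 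A cleaner way to close Rule (G)---which the paper itself uses in the (MC) scenario---is to bypass the comparison with (L) altogether: since conditioning i.i.d.\ Poissons on their sum yields the multinomial law, one has the exact identity
\[
\P_{\rm G,IB}^{\ssup N}\bigl((A_N^{\ssup j})_j=(x_j)_j\bigr)=\frac{\Bin_{bN,p}(a)}{\Poi_{Nbp}(a)}\prod_{j=1}^N\Poi_{bp}(x_j),\qquad a=\textstyle\sum_j x_j,
\]
which feeds directly into the i.i.d.-Poisson LDP you already have, and a one-line Stirling computation of the prefactor produces the extra term in \eqref{Idef} without any hypergeometric estimate.
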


The proof is in Section~\ref{sec-ProofLDPmicroslot} for Rule (L) and in Section~\ref{sec-ProofLDPmicroslot-GI} for Rule (G). An alternate proof under Rule (L) is described in Section~\ref{sec-ProofThm1}; this leads to a very different formula for the rate function.

The stated LDP says that for any open, respectively closed, set $G,F\subset [0,b]\times[0,b]\times[0,1]$ ,
\begin{eqnarray*}
\limsup_{N\to\infty}\frac{1}{N}\log\P^{\ssup N}_{{\rm D,IB}} \Big(\frac 1N\big(A_N, S_{N},R_N\big)\in F\Big)&\leq&-\underset{F}{\inf}I_{{\rm D,IB}},\\
\underset{N\rightarrow\infty}{\lim\inf}\frac{1}{N}\log\P^{\ssup N}_{{\rm D,IB}}\Big(\frac 1N\big(A_N, S_{N},R_N\big)\in G\Big)&\geq&-\underset{G}{\inf}I_{{\rm D,IB}}.
\end{eqnarray*}
This can be symbolically summarized by saying that for any $(a,s,r)$
$$
\P^{\ssup N}_{\rm{D,IB}}\big({\smfrac 1N}(A_N,S_N,R_N)\approx (a,s,r)\big)\approx \e^{-N I_{\rm {D,IB}}(a,s,r)},\qquad N\to\infty.
$$
See \cite{DZ10} for an account on the theory of (the probabilities of) large deviations.

\begin{remark}[LDP for $S_N$]\label{rem-LDPSN}
A standard corollary of Theorem~\ref{thm-LDPALOHA}
is an LDP for the number $S_N$  of successes, which follows directly from the contraction principle (which says that $(\varphi(X_N))_{N\in\N}$ satisfies an LDP if $(X_N)_{N\in\N}$ does and $\varphi$ is continuous, and it gives a formula for the rate function). Indeed $\frac 1N S_N$ satisfies an LDP under $\P_{\rm {D,IB}}^{\ssup N}$ with rate function for D$=$L
$$
s\mapsto \inf_{a,r}I_{\rm {L,IB}}(a,s,r)=\inf\Big\{H(\mu|\Poi_{bp})\colon\mu\in\Mcal_1(\N_0),\sum_{k\in[\kappa]} k\mu(k)=s\Big\}.
$$
This formula is further analysed as a by-product in the proof of Theorem~\ref{thm-conditionalthroughput}. A conclusion is that the probability to have less than $N (s_{\rm IB}(p,\kappa)-\eps)$ successes decays exponentially fast with rate $\inf\{H(\mu|\Poi_{bp})\colon\mu\in\Mcal_1(\N_0),\sum_{k\in[\kappa]} k\mu(k)\leq s_{\rm IB}(p,\kappa)-\eps\}$, which is a positive number. Certainly, the analogous statement holds also for Rule (G). Furthermore, we can also apply the contraction principle to obtain an LDP for $R_N$ or for the pair $(A_N,S_N)$.
\hfill$\Diamond$
\end{remark}

\begin{remark}[Higher precision]\label{rem-highprec} 
With more of technical work, we could also prove the following, stronger assertion.
Fix $a,s\in[0,b]$ satisfying $s\leq a$ and fix $r\in[0,1]$. Pick sequences $a_N,s_N,r_N\in\frac 1N \N_0$ such that $a_N\to a$, $s_N\to s$ and $r_N\to r$ as $N\to \infty$. Then for ${\rm D}\in\{\rm {G, L}\}$,
\begin{equation}\label{LDPlimit}
I_{\rm {D,IB}}(a,s,r)=-\lim_{N\to\infty}\frac 1N\log \P^{\ssup N}_{\rm{D,IB}}\big(A_N=N a_N,S_N=Ns_N, R_N=N r_N\big).
\end{equation}
\hfill$\Diamond$
\end{remark}

\begin{remark}[Difference of the rate functions]\label{rem-diffratefcts} In the proof in Section~\ref{sec-ProofLDPmicroslot-GI} it will turn out that, under Rule (L), $A_N$ has the distribution of $N$ independent $\Bin_{bN,p/N}$-distributed random variables, while unter Rule (G), $A_N$ is $\Bin_{bN,p}$-distributed. Given $A_N$, the distribution of $(S_N,R_N)$ is the same under both rules. The last term on  the right-hand side of \eqref{Idef} (i.e., the difference of the two rate functions) is equal to the difference of the two rate functions for $\frac 1N A_N$. These two rate functions are
\begin{eqnarray}
J_{\rm{L}}(a)&=&pb-a+a\log\frac{a}{pb},\\
J_{\rm{G}}(a)&=&a\log\frac{a}{p}+(b-a)\log\frac{b-a}{1-p}-b\log b,
\end{eqnarray}
and the last term in \eqref{Idef} is equal to $J_{\rm{G}}(a)-J_{\rm{L}}(a)$. Note that
$$
J_{\rm{G}}'(a)=\log\frac a{b-a}+\log\frac {1-p}p,\qquad J_{\rm{G}}''(a)=\frac b{a(b-a)},
$$
and $J_{\rm{L}}'(a)= \log\frac a{bp}$ and $J_{\rm{L}}''(a)= \frac 1{bp}$. Hence, $J_{\rm{L}}''(bp)<J_{\rm{G}}''(bp)$ and therefore, for $a$ in a neighbourhood of the minimal site $bp$ outside $bp$, we see that $J_{\rm{L}}(a)<J_{\rm{G}}(a)$. This shows that under Rule (G) the number of attempts has a smaller variance (even on the exponential scale) than under Rule (L), which we consider as a structural advantage of (G) over (L).
\hfill$\Diamond$
\end{remark}

\begin{remark}[Analysis of rate function]\label{rem-Anarate fct}
On the first view, the formula in \eqref{Idefmicro} seems to be rather involved, but in the proof of Theorem~\ref{thm-conditionalthroughput} we will find the minimizing $\mu$ for $\inf_r I_{\rm L,IB}(a,s,r)$ and will characterize it using standard variational analysis.
\hfill$\Diamond$
\end{remark}

\begin{remark}[Alternative rate function]\label{rem-altratefct}
Our proof of Theorem~\ref{thm-LDPALOHA} in Sections~\ref{sec-ProofLDPmicroslot} and \ref{sec-ProofLDPmicroslot-GI} is based on Sanov's theorem and the contraction principle and leads to an entropy description of the rate function.  In Section~\ref{sec-ProofThm1} we give an alternate proof of Theorem~\ref{thm-LDPALOHA} using Cram\'er's theorem, leading to a representation of the rate function involving Legendre transforms of logarithms of moment-generating functions. This representation appears in \eqref{ratefctalternate}.
\hfill$\Diamond$
\end{remark}


Now we turn to our LDP for the multi-channel case. Recall that Model (B) in \cite{HLS12} is contained in what we called Scenario (MC).

\begin{theorem}[LDP for $\frac 1N(A_N,S_N)$ for Scenario (MC)]\label{thm-LDPMultichannel}
	Fix the model parameters $b,p>0$ and $\kappa\in\N$ channels, where we assume $p\leq 1$ for Rule $\rm {D=G}$. Then the tuple $\frac 1N(A_N,S_N)$ satisfies an LDP under $\P^{\ssup N }_{\rm{ D,MC}}$ for ${\rm D}\in\{{\rm L,G}\}$ with rate function (for $\rm {D=L}$)
		\begin{equation}\label{ILMC}
I_{\rm{L,MC}}(a,s)=\inf\Big\{H(\nu|M)\colon\nu\in\Mcal_1(\Xi),\sum_{(i,j)\in \Xi}\nu(i,j)i=a,\sum_{(i,j)\in\Xi}\nu(i,j)j=s \Big\},
	\end{equation}
	where $\Xi=\{(i,j)\in\N_0^2\colon j\leq i\mbox{ and }j\leq \kappa\}$ and the reference probability measure $M$ on $\Xi$ is given as
\begin{equation}\label{Rdef}
M(i,j)=\Poi_{bp/\kappa}^{\otimes \kappa}\Big(\sum_{k\in[\kappa]} X_k=i,\sum_{k\in[\kappa]} \1_{\{X_k=1\}}=j\Big),
\end{equation}
and, for $\rm {D=G}$, with rate function given as
	\begin{equation}\label{IGMC}
	\begin{aligned}
	I_{\rm{G,MC}}(a,s)&=\kappa\inf\Big\{H(\mu|{\rm Poi}_{bp/\kappa})\colon\mu\in\Mcal_1(\N_0),\sum_{g\in\N_0}\mu(g)g=\frac{a}{\kappa},\mu(\{1\})=\frac{s}{\kappa} \Big\}\\
	&\quad+a-bp+(b-a)\log\frac{1-\frac ab}{1-p}.
	\end{aligned}
	\end{equation}
\end{theorem}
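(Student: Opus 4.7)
My plan is to treat the two rules separately. Under Rule (L) the per-slot tuples $(A_N^{\ssup t},S_N^{\ssup t})$ are independent across $t$, so Sanov together with the contraction principle applies directly. Under Rule (G) these per-slot tuples are coupled through the random total $A_N\sim\Bin_{bN,p}$, and I would decompose by first conditioning on $A_N$ and then combining a Cram\'er-type LDP for $A_N$ with a conditional LDP for $S_N$.

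For Rule (L)+(MC): in each slot $t\in[N]$, the $bN$ participants independently attempt with probability $p/N$ and, when attempting, choose a channel uniformly in $[\kappa]$. Hence the per-slot channel-count vector $(Y_{1,t},\dots,Y_{\kappa,t})$ is multinomial, and the pairs $(A_N^{\ssup t},S_N^{\ssup t})=\bigl(\sum_k Y_{k,t},\sum_k \1\{Y_{k,t}=1\}\bigr)\in\Xi$ are i.i.d.\ across $t\in[N]$ with some common law $Q_N$ on $\Xi$. A Poissonisation/multinomial-to-Poisson estimate, made uniform via exponential integrability, yields $Q_N\to M$ in a sense strong enough to apply the version of Sanov's theorem for i.i.d.\ samples with a sequence of reference laws converging to $M$. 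This produces an LDP for the empirical measure $\nu_N=\tfrac 1N\sum_{t=1}^N\delta_{(A_N^{\ssup t},S_N^{\ssup t})}$ in $\Mcal_1(\Xi)$ with rate $H(\cdot|M)$. The contraction principle applied to the linear map $\nu\mapsto\bigl(\sum_{(i,j)\in\Xi}i\,\nu(i,j),\sum_{(i,j)\in\Xi}j\,\nu(i,j)\bigr)$ then yields formula~\eqref{ILMC}.

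For Rule (G)+(MC), I would decompose
\[
\P^{\ssup N}_{\rm G,MC}\bigl(\tfrac 1N(A_N,S_N)\approx(a,s)\bigr)=\P\bigl(\tfrac 1N A_N\approx a\bigr)\cdot\P^{\ssup N}_{\rm G,MC}\bigl(\tfrac 1N S_N\approx s\bigm| A_N\approx aN\bigr).
\]
The first factor comes from $\Bin_{bN,p}$ and has Cram\'er rate $J_{\rm G}(a)$ as in Remark~\ref{rem-diffratefcts}. For the second, given $A_N=aN$ attempts distributed uniformly over the $N\kappa$ slot-channel pairs, the joint law of the counts $(Y_{k,t})$ coincides with that of i.i.d.\ $\Poi_{a/\kappa}$ variables conditioned on their sum being $A_N$. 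Applying Sanov to $\mu_N=\tfrac{1}{N\kappa}\sum_{k,t}\delta_{Y_{k,t}}$ under i.i.d.\ $\Poi_{a/\kappa}$, the conditioning automatically forces $\sum_g g\mu_N(g)=a/\kappa$, a constraint attained at zero rate by $\Poi_{a/\kappa}$ itself; hence the conditional LDP for $\tfrac 1N S_N=\mu_N(\{1\})\kappa$ has rate $\kappa\inf\{H(\mu|\Poi_{a/\kappa})\colon\sum_g g\mu(g)=a/\kappa,\,\mu(\{1\})=s/\kappa\}$. A direct computation using $\sum_g g\mu(g)=a/\kappa$ gives the identity $\kappa H(\mu|\Poi_{a/\kappa})=\kappa H(\mu|\Poi_{bp/\kappa})-J_{\rm L}(a)$, and combining with $J_{\rm G}(a)$ together with $J_{\rm G}(a)-J_{\rm L}(a)=a-bp+(b-a)\log\frac{1-a/b}{1-p}$ (Remark~\ref{rem-diffratefcts}) produces formula~\eqref{IGMC}.

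The hard part will be executing Sanov plus contraction rigorously on the non-compact state spaces $\Xi$ and $\N_0$: the target functionals $\nu\mapsto\sum i\,\nu(i,j)$ and $\mu\mapsto\sum g\mu(g)$ are unbounded and only lower semicontinuous in the weak topology, so one must establish exponential tightness of $\nu_N$ and $\mu_N$ and control the contraction on these tight sets. Exponential integrability of the Poisson references handles this, but care is needed near the boundary $a=b$ under Rule (G). A secondary subtlety is turning the heuristic conditioning into a rigorous statement; the cleanest route is to prove a joint LDP for $(\mu_N,\tfrac 1N A_N)$ under the unconditioned i.i.d.\ $\Poi_{a/\kappa}$ law and extract the conditional asymptotics via a Varadhan/Laplace argument, exactly parallel to the proof of Theorem~\ref{thm-LDPALOHA} under Rule (G) announced in Section~\ref{sec-ProofLDPmicroslot-GI}.
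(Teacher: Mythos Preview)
Your plan for Rule~(L) is essentially the paper's: per-slot pairs are i.i.d.\ with law $Q_N\to M$, one shows exponential equivalence with i.i.d.\ $M$-samples, applies Sanov, and contracts via $\nu\mapsto(\sum i\,\nu,\sum j\,\nu)$ after a cutting argument to handle the unbounded first coordinate. The paper carries this out exactly, referring back to the exponential-equivalence and cutting estimates already done for Theorem~\ref{thm-LDPALOHA}.

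For Rule~(G) your route is correct but differs from the paper's in one bookkeeping choice. You use the multinomial--Poisson identity with parameter $a/\kappa$ (so that the reference $\Poi_{a/\kappa}$ already satisfies the mean constraint, making the conditioning cost zero), and then invoke the identity $\kappa H(\mu\mid\Poi_{a/\kappa})=\kappa H(\mu\mid\Poi_{bp/\kappa})-J_{\rm L}(a)$ to reach \eqref{IGMC}. The paper instead exploits the freedom in the Poissonisation parameter and takes it to be $bp/\kappa$ from the start, writing
\[
\P^{\ssup N}_{\rm G,MC}(A_N=a,S_N=s)=\frac{\Bin_{bN,p}(a)}{\Poi_{bpN}(a)}\,\Poi_{bp/\kappa}^{\otimes\kappa N}\Bigl(\textstyle\sum_i X_i=a,\ \sum_i\1_{\{X_i=1\}}=s\Bigr).
\]
The quotient is handled by a direct Stirling computation (yielding the correction term $a-bp+(b-a)\log\frac{1-a/b}{1-p}$), and the second factor is an \emph{unconditioned} i.i.d.\ product, so Sanov plus contraction applies with the fixed reference $\Poi_{bp/\kappa}$ and both constraints $\sum g\mu(g)=a/\kappa$, $\mu(\{1\})=s/\kappa$ built in. This sidesteps the Gibbs-conditioning step you flag as a ``secondary subtlety'' and avoids having the Poisson parameter depend on the conditioning value $a$; your approach is equivalent after the entropy identity but requires an extra layer of justification for the conditional LDP.
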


The proof is in Section~\ref{sec-ProofLDPmicroslot-LMC} for Rule (L) and in Section~\ref{sec-ProofLDPmicroslot-GMC} for Rule (G).

\begin{remark}[Interpretation]\label{rem-Interpr}
The reference measure $M$ has the interpretation of a channel-choice distribution. Indeed, the Poisson-distributed variables $X_k$, $k \in[\kappa]$, with parameter $bp$ stand for the number of participants that choose the $k$-th channel for the transmission attempt; then $M(i,j)$ is the probability that in total $i$ attempts are made and $j$ successes are earned.
\hfill$\Diamond$
\end{remark}

\begin{remark}[Contraction principle]\label{rem-ContrPrinc}
The analogous assertions of Remark~\ref{rem-LDPSN} about an LDP for $S_N$, e.g., hold certainly also for Scenario (MC).
\hfill$\Diamond$
\end{remark}

\begin{remark}[Difference of the two rate functions]\label{rem-diffratefctMC}
The difference of the two rate functions in \eqref{IGMC} is the same as in \eqref{Idef}, but the reason is different from the reason in Scenario (IB) (see Remark~\ref{rem-diffratefcts}). It comes out by some explicit manipulation of the distribution of $(A_N,S_N)$, for which cannot offer an easy interpretation.
\hfill$\Diamond$
\end{remark}

\begin{remark} Like for Scenario (IB), we could prove, with more technical work, the following also in Scenario (MC). Fix $a,s\in[0,b]$ satisfying $s\leq a$ and pick sequences $a_N,s_N\in\frac 1N \N_0$ such that $a_N\to a$ and $s_N\to s$ as $N\to \infty$. Then for $D\in\{L,G\}$,
	\begin{equation}\label{LDPlimit-Multichannel-G}
	I_{\rm{D,MC}}(a,s)=-\lim_{N\to\infty}\frac 1N\log \P^{\ssup N}_{\rm{D,MC}}\big(A_N=N a_N,S_N=Ns_N\big)
	\end{equation}
	\end{remark}

\subsubsection{Laws of large numbers}\label{sec-LLN}

It is a standard conclusion from the LDP that, if the rate function has a unique minimizer at $(a_p,s_p,r_p)$, a law of large numbers (LLN) follows, i.e., $\frac 1N(A_N, S_{N},R_N)\to (a_p,s_p,r_p)$ in  probability with exponential decay of the probability of being outside a neighbourhood of $(a_p,s_p,r_p)$. Hence, the following statement implies two LLNs.

\begin{cor}[LLN for the throughput in Scenario (IB)]\label{cor-LLNSALOHA}
The  two rate functions $I_{\rm{G,IB}}$ and $I_{\rm{L,IB}}$ are both strictly convex and possess the same unique minimizer $(a_{\rm IB}(p,\kappa),s_{\rm IB}(p,\kappa),r_{\rm IB}(p,\kappa))$ given by
\begin{eqnarray}
a_{\rm IB}(p,\kappa)&=&pb=\E_{\Poi_{bp}}(X),\\
s_{\rm IB}(p,\kappa)&=&\e^{-bp}\sum_{i=0}^\kappa i \frac{(bp)^i}{i!}=\E_{{\rm Poi}_{bp}}[X\1\{X\leq \kappa\}]=bp \,\e^{-bp}\sum_{i=0}^{\kappa-1}\frac{(bp)^i}{i!},\label{minimizers}\\
r_{\rm IB}(p,\kappa)&=&\e^{-bp}\sum_{i=0}^{\kappa}\frac{(bp)^i}{i!}={\rm Poi}_{bp}([0,\kappa]).\label{minimizerr}
\end{eqnarray}
\end{cor}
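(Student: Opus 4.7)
The plan is to exploit that both rate functions are built from the non-negative relative entropy $H(\,\cdot\,|\Poi_{bp})$ together with an additive correction depending only on $a$. Since $H(\mu|\Poi_{bp})\ge 0$ with equality iff $\mu=\Poi_{bp}$, the whole analysis reduces to locating these zeros and checking strict convexity.

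First I would compute the minimum of $I_{\rm L,IB}$. Since $H(\mu|\Poi_{bp})\ge 0$, we have $I_{\rm L,IB}\ge 0$, and $I_{\rm L,IB}(a,s,r)=0$ is possible only if there is $\mu$ with both $H(\mu|\Poi_{bp})=0$ and $\sum_k f(k)\mu_k=(a,s,r)$. The first condition forces $\mu=\Poi_{bp}$, and the second then pins $(a,s,r)$ down uniquely as the three Poisson moments $\E_{\Poi_{bp}}[X]=bp$, $\E_{\Poi_{bp}}[X\1\{X\le\kappa\}]$ and $\Poi_{bp}([0,\kappa])$; using $\mu=\Poi_{bp}$ itself as a feasible competitor shows that $I_{\rm L,IB}=0$ is actually attained at this triple. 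The alternative representation of $s_{\rm IB}$ follows from the identity $i\,\Poi_{bp}(i)=bp\,\Poi_{bp}(i-1)$ applied termwise in the partial sum.

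Next I would handle $I_{\rm G,IB}$ by writing $I_{\rm G,IB}(a,s,r)=I_{\rm L,IB}(a,s,r)+g(a)$ with $g(a)=(b-a)\log\frac{1-a/b}{1-p}+a-bp$. A short one-variable calculation gives $g(bp)=0$, $g'(a)=-\log\frac{1-a/b}{1-p}$ (which vanishes exactly at $a=bp$), and $g''(a)=\frac{1}{b-a}>0$ on $[0,b)$, so $g$ is strictly convex with unique zero at $a=bp$. Both summands of $I_{\rm G,IB}$ are therefore non-negative and vanish simultaneously only at $a=bp$, so the minimizer of $I_{\rm G,IB}$ agrees with that of $I_{\rm L,IB}$.

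Finally I would establish strict convexity. Let $\theta_i=(a_i,s_i,r_i)$, $i=1,2$, be distinct triples with finite rate and $\lambda\in(0,1)$. Compactness of the sublevel sets of $H(\,\cdot\,|\Poi_{bp})$ together with its lower semi-continuity gives existence of minimizing measures $\mu_i$ with $H(\mu_i|\Poi_{bp})=I_{\rm L,IB}(\theta_i)$ and $\sum_k f(k)\mu_i(k)=\theta_i$. Linearity of the constraint and $\theta_1\ne\theta_2$ force $\mu_1\ne\mu_2$, so $\lambda\mu_1+(1-\lambda)\mu_2$ is an admissible competitor for $\lambda\theta_1+(1-\lambda)\theta_2$, and strict convexity of the relative entropy yields
\[I_{\rm L,IB}(\lambda\theta_1+(1-\lambda)\theta_2)\le H(\lambda\mu_1+(1-\lambda)\mu_2|\Poi_{bp})<\lambda I_{\rm L,IB}(\theta_1)+(1-\lambda)I_{\rm L,IB}(\theta_2).\]
Adding the strictly convex (in $a$) function $g$ preserves strict convexity, giving the same property for $I_{\rm G,IB}$. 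The only mildly delicate point is this last strict-convexity argument (it relies on attainment of the infimum in the entropy problem); the identification of the minimizer itself is essentially a one-line observation about $H(\,\cdot\,|\Poi_{bp})$.
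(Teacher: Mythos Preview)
Your proposal is correct and follows essentially the same approach as the paper: both hinge on the fact that $\mu\mapsto H(\mu|\Poi_{bp})$ is strictly convex with unique minimizer $\mu=\Poi_{bp}$, so the minimizing triple $(a,s,r)$ is forced to be $\sum_k f(k)\Poi_{bp}(k)$. The paper's own proof is literally one sentence to this effect; your version is more thorough in that you separately verify that the additive correction $g(a)$ in $I_{\rm G,IB}$ is strictly convex with unique zero at $a=bp$, and you spell out the strict-convexity argument for $I_{\rm L,IB}$ via existence of entropy minimizers, both of which the paper leaves implicit.
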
 

\begin{proof} Just recall that the map $\mu\mapsto H(\mu|\Poi_{bp})$ is strictly convex and has the unique minimizer $\mu=\Poi_{bp}$; hence the unique minimizing $(a,s,r)$ must be compatible with that, i.e., equal to $\sum_{k\in\N_0} f(k)\Poi_{bp}(k)$.
\end{proof}
In particular, the throughput in Scenario (IB) is equal to the $\Poi_{bp}$-expectation of $X\1\{X\leq \kappa\}$, and the typical rate of successful micro time slots is ${\rm Poi}_{bp}([0,\kappa])$.

In the same way, we see the analogous statement for (MC):

\begin{cor}[LLN for the throughput in Scenario (MC)]\label{cor-LLNSALOHA-MC}
The two rate functions $I_{\rm{G,MC}}$ and $I_{\rm{L,MC}}$ are both strictly convex and possess the same unique minimizer 
$$
\big(a_{\rm MC}(p,\kappa),s_{\rm MC}(p,\kappa)\big)=\big(pb, pb\e^{-bp/\kappa}\big).
$$
\end{cor}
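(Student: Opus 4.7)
The plan is to mirror the strategy of the proof of Corollary~\ref{cor-LLNSALOHA}, exploiting that relative entropy is non-negative, strictly convex, and vanishes only at its reference measure. For $I_{\rm L,MC}$ the key observation is that the unconstrained infimum of $\nu\mapsto H(\nu|M)$ over $\Mcal_1(\Xi)$ equals zero and is uniquely attained at $\nu=M$. Plugging $\nu=M$ into the two linear constraints yields $a=\E_M[I]=\kappa\cdot(bp/\kappa)=bp$, since under $M$ the coordinate $I$ is the sum of $\kappa$ i.i.d.\ $\Poi_{bp/\kappa}$ variables, and $s=\E_M[J]=\kappa\,\Poi_{bp/\kappa}(\{1\})=bp\,e^{-bp/\kappa}$. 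Hence $I_{\rm L,MC}(bp,bp\,e^{-bp/\kappa})=0$, and by strict convexity of $H(\cdot|M)$ this is the only point at which $I_{\rm L,MC}$ vanishes.

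For $I_{\rm G,MC}$ I would split the formula~\eqref{IGMC} into two non-negative summands. The entropy piece $\kappa\inf\{H(\mu|\Poi_{bp/\kappa})\colon\sum_g g\mu(g)=a/\kappa,\ \mu(\{1\})=s/\kappa\}$ vanishes iff both moment constraints are simultaneously compatible with $\mu=\Poi_{bp/\kappa}$, which forces $(a,s)=(bp,bp\,e^{-bp/\kappa})$. The remaining term $a-bp+(b-a)\log\tfrac{1-a/b}{1-p}$ coincides with the Bin-versus-Poisson discrepancy $J_{\rm G}(a)-J_{\rm L}(a)$ highlighted in Remark~\ref{rem-diffratefcts}; a short one-variable computation shows its $a$-derivative equals $-\log\tfrac{1-a/b}{1-p}$, which vanishes exactly at $a=bp$, where the term itself is zero, and its second derivative $\tfrac{1}{b-a}$ is strictly positive. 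Both pieces therefore attain their common minimum zero precisely at $(bp,bp\,e^{-bp/\kappa})$.

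The main obstacle, as already arises in Corollary~\ref{cor-LLNSALOHA}, is the strict convexity claim for $(a,s)\mapsto I_{\rm D,MC}(a,s)$ itself, since contracting a strictly convex functional through a linear moment map is in general only convex and not strictly so. The cleanest route seems to be via the alternate Cram\'er-type representation analogous to the one used in Section~\ref{sec-ProofThm1} for Scenario~(IB): under Rule~(L) the $N$ slots are i.i.d., so Cram\'er's theorem identifies $I_{\rm L,MC}$ as the Legendre transform of the joint log-moment-generating function of the single-slot pair $(A_1,S_1)$, and strict convexity of that log-MGF — which holds because $(A_1,S_1)$ is not supported on any proper affine line in $\R^2$ — passes to strict convexity of the Legendre transform on the interior of its effective domain. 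For Rule~(G), strict convexity is then inherited because the additional term is strictly convex in $a$ by the second-derivative computation above, and the $s$-dependence is shared with $I_{\rm L,MC}$.
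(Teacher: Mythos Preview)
Your argument for locating the unique zero of each rate function is exactly the paper's approach: the paper gives no separate proof of this corollary and simply says ``In the same way'' as Corollary~\ref{cor-LLNSALOHA}, i.e., the entropy vanishes only at the reference measure, and plugging that measure into the linear constraints produces $(bp,\,bp\,\e^{-bp/\kappa})$. Your computations of $\E_M[I]$, $\E_M[J]$, and of the extra term $a-bp+(b-a)\log\frac{1-a/b}{1-p}$ are correct.

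Where you go beyond the paper is in the strict convexity discussion. The paper's two-line proof of Corollary~\ref{cor-LLNSALOHA} only invokes strict convexity of $\mu\mapsto H(\mu|\cdot)$ and does not address the issue you correctly flag, namely that contraction through a linear moment map preserves convexity but not, in general, strict convexity. Your Cram\'er route for $I_{\rm L,MC}$ is sound: the per-slot pair $(A^{\ssup 1},S^{\ssup 1})$ is genuinely two-dimensional (e.g.\ the values $(0,0),(1,1),(2,0)$ all have positive $M$-mass), so the log-moment-generating function is strictly convex and so is its Legendre transform on the interior of the effective domain. One small imprecision: your final sentence for $I_{\rm G,MC}$, that ``the $s$-dependence is shared with $I_{\rm L,MC}$'', is not literally what \eqref{IGMC} says. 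The cleanest fix is either to invoke Remark~\ref{rem-diffratefctMC} (the entropy part of \eqref{IGMC} equals $I_{\rm L,MC}$, so $I_{\rm G,MC}=I_{\rm L,MC}+(\text{strictly convex in }a)$), or to run the same Cram\'er argument directly on the pair $(X,\1_{\{X=1\}})$ with $X\sim\Poi_{bp/\kappa}$, which is again not supported on a line.
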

In particular, the throughput in Scenario (MC) is equal to $bp\e^{-bp/\kappa}$. 

\subsubsection{Optimal $p$}\label{sec-optimalp}

A natural and important question is about that value of $p$ that maximizes the expected throughput per micro slot, $s_{\rm IB}(p,\kappa)$, respectively $s_{\rm MC}(p,\kappa)$. Since $p$ is restricted to $[0,1]$ under  Rule (G), we will consider only Rule (L), where we can optimize over all $p\in(0,\infty)$.

For Scenario (MC), the answer is easily derived by differentiating: the optimal $p$ is equal to $\kappa/b$, and the optimal throughput is equal to $\kappa/\e$.

Scenario (IB) is more interesting. It is clear that the optimal value of $p$ should be such that $bp$ is smaller than $\kappa$, since otherwise the number of attempts per time slot is larger than the success threshold. But the question is how much below one should go in order not to underachieve more than necessary.

\begin{lemma}[Optimal $p$]\label{lem-optimalp}
For any $\kappa\in\N$, there is precisely one $p_*\in(0,\infty)$ that maximizes the map $(0,\infty)\ni p\mapsto s_{\rm IB}(p,\kappa)$. It is  characterised by
\begin{equation}\label{pMax}
\frac {(a_*)^\kappa}{(\kappa-1)!}=\sum_{i\in\N_0\colon i\leq \kappa-1}\frac{(a_*)^i}{i!},\qquad a_*=b p_*,
\end{equation}
and it satisfies $b p_*<\kappa-1$ and $b p_*\sim \kappa$ as $\kappa\to\infty$. More precisely, we even have $b p_*\geq (\kappa-\sqrt \kappa)^{1-\kappa^{-1/2}}$ for any $\kappa$. Furthermore, $p\mapsto s_{\rm IB}(p,\kappa)$ strictly increases in $[0,p_*]$ and strictly decreases in $[p_*,\infty)$.
\end{lemma}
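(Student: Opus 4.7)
The plan is to substitute $a = bp$ (a monotone change of variable), extract the critical-point equation via logarithmic differentiation, handle uniqueness and the monotonicity claim through a single strictly increasing ratio, and finally establish the quantitative bounds on $a_* = bp_*$ by rewriting things in Poisson language and invoking Stirling.

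Setting $a = bp$, by \eqref{minimizers} we are to maximize $s(a) := a\, e^{-a} f(a)$ with $f(a) := \sum_{i=0}^{\kappa-1} a^i/i!$. The elementary identity $f'(a) = f(a) - a^{\kappa-1}/(\kappa-1)!$ plus logarithmic differentiation yield
$$
\frac{s'(a)}{s(a)} = \frac{1-r(a)}{a}, \qquad r(a) := \frac{a^\kappa/(\kappa-1)!}{f(a)},
$$
so that the critical-point equation $s'(a)=0$ reads exactly $r(a)=1$, matching \eqref{pMax}. A quotient-rule computation, using $a f'(a) = \sum_{i=1}^{\kappa-1} i\, a^i/i!$, gives
$$
r'(a) = \frac{a^{\kappa-1}}{(\kappa-1)!\, f(a)^2}\sum_{i=0}^{\kappa-1}(\kappa-i)\frac{a^i}{i!} > 0
$$
on $(0,\infty)$. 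Since $r(0^+)=0$ and $r(a)\to\infty$ as $a\to\infty$, $r(a)=1$ has a unique positive root $a_*$; the sign of $s' = s(1-r)/a$ then gives strict increase of $s$ on $(0,a_*)$ and strict decrease on $(a_*,\infty)$.

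For the quantitative bounds I would rewrite $f(a) = e^a\Poi_a([0,\kappa-1])$ and $a^\kappa/(\kappa-1)! = \kappa\,e^a\Poi_a(\{\kappa\})$, so that $r(a) = \kappa\,\Poi_a(\{\kappa\})/\Poi_a([0,\kappa-1])$. The upper bound $a_*<\kappa-1$ then amounts to checking $r(\kappa-1)>1$; after clearing denominators this is equivalent to the combinatorial inequality $\sum_{j=0}^{\kappa-1}\prod_{l=0}^{j-1}(1-l/(\kappa-1)) < \kappa-1$, which holds for all sufficiently large $\kappa$ by Stirling (the ratio $\kappa\Poi_{\kappa-1}(\{\kappa-1\})$ is of order $\sqrt\kappa$ while $\Poi_{\kappa-1}([0,\kappa-1])\to 1/2$) and can be checked by hand for the remaining small values of $\kappa$. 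For the lower bound $a_*\geq a_0 := (\kappa-\sqrt\kappa)^{1-\kappa^{-1/2}}$, by monotonicity of $r$ it suffices to verify $r(a_0)\leq 1$. Here I would use a Gaussian deviation bound to get $\Poi_{a_0}([0,\kappa-1])\geq 1/2$ (valid since $a_0$ is well below $\kappa-1$), and control $a_0^\kappa/(\kappa-1)!$ from above via Stirling's lower bound $(\kappa-1)!\geq \sqrt{2\pi(\kappa-1)}((\kappa-1)/e)^{\kappa-1}$, combined with the clean algebraic identity $a_0^\kappa = (\kappa-\sqrt\kappa)^{\kappa-\sqrt\kappa}$ arising from the specific form of $a_0$. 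After taking logarithms, the desired inequality collapses to an asymptotic comparison of the shape
$$
\phi(\kappa-\sqrt\kappa) - \phi(\kappa-1) + (\kappa-1) - a_0 \leq O(\log\kappa), \qquad \phi(x) := x\log x,
$$
verified by a mean-value estimate with $\phi'(x) = \log x + 1$. The asymptotic $a_*\sim\kappa$ then follows from $a_0/\kappa\to 1$ (a consequence of $(\log\kappa)/\sqrt\kappa \to 0$) combined with the upper bound.

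The delicate step will be the lower bound: the exponent $1-\kappa^{-1/2}$ is tuned precisely so that the leading Stirling contributions cancel, and tracking the subleading logarithmic corrections—together with ensuring the inequality is uniform in $\kappa$, possibly by separate verification for the first few values—will require some care.
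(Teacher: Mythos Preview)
Your treatment of existence, the critical-point equation, uniqueness, and the increase/decrease dichotomy is correct and in fact cleaner than the paper's: working with the ratio $r(a)=\dfrac{a^\kappa/(\kappa-1)!}{f(a)}$ and showing directly that $r'>0$ settles everything at once, whereas the paper computes the derivative of the polynomial $a^\kappa-\sum_{i\le\kappa-1}\frac{(\kappa-1)!}{i!}a^i$ and checks its sign at any zero.

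For the quantitative bounds, however, you are taking a much harder road than necessary, and your lower-bound sketch is not a complete argument. The paper avoids Stirling, Poisson tail estimates, and any case-checking entirely, relying only on the elementary factorial bounds
\[
(i+1)^{\kappa-1-i}\ \le\ \frac{(\kappa-1)!}{i!}\ \le\ (\kappa-1)^{\kappa-1-i},\qquad 0\le i\le\kappa-1.
\]
The right inequality, summed as a geometric series, gives $a^\kappa-\sum_{i\le\kappa-1}\frac{(\kappa-1)!}{i!}a^i\ge \dfrac{a^\kappa(a-\kappa)+(\kappa-1)^\kappa}{a-(\kappa-1)}$, which is positive for $a>\kappa-1$ and yields the upper bound directly and uniformly in $\kappa$. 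For the lower bound the paper simply discards all but one term of the sum defining $f$: this gives $r(a)\le \dfrac{a^{\kappa-i}\,i!}{(\kappa-1)!}\le \dfrac{a^{\kappa-i}}{(i+1)^{\kappa-1-i}}$, which is $\le 1$ whenever $a\le(i+1)^{1-1/(\kappa-i)}$. Choosing $i=\kappa-\sqrt\kappa$ produces exactly $a_*\ge(\kappa-\sqrt\kappa)^{1-\kappa^{-1/2}}$ for every $\kappa$, no asymptotics needed. Your Stirling route, by contrast, only yields the inequality for large $\kappa$ and leaves an unspecified finite range to be verified by hand; and since the exponent $1-\kappa^{-1/2}$ is tailored to this single-term trick (it is precisely $1-1/(\kappa-i)$ at $i=\kappa-\sqrt\kappa$) rather than to a Gaussian approximation, the subleading bookkeeping you anticipate being delicate is delicate because it is chasing the wrong cancellation.
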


The proof of Lemma~\ref{lem-optimalp} is in Section~\ref{sec-optimisep}.

\subsubsection{Conditioning the number of attempts on the number of successes}\label{sec-successes}
In this section we discuss an interesting question in the interferenced-based scenario, where too many messages lead to a serious descrease of throughput:  what is the most likely reason for a deviation event of the form that the throughput is below the theoretically optimal one? Have there been too many message emission attempts, such that the interference canceled many, or did the system underachieve, i.e., had fewer attempts than could be handled successfully?

This question can be answered with the help of large-deviation theory, combined with an analysis of the rate functions. We handle this only for the Rule (L), where we can work with any value of $p\in(0,\infty)$. In order to formalize this question, we write $\P^{\ssup {N,p}}_{\rm L,IB}=\P^{\ssup {N,p}} $ for the probability measure in Scenario (IB) with parameter $p$ and $\E^{\ssup {N,p}} $ for the corresponding expectation. Picking some $0<s\leq a$, then it follows from Remark~\ref{rem-highprec} that
$$
\lim_{N\to\infty}\frac 1N\log \P^{\ssup {N,p}} \big(A_N=\lfloor aN\rfloor\,\big|\,S_N=\lfloor N s\rfloor\big)=-\inf_{r} I_{\rm L,IB}^{\ssup p}(a,s,r)+\inf_{\widetilde a,r}I_{\rm L,IB}^{\ssup p}(\widetilde a,s,r),
$$
where we wrote $I_{\rm L,IB}^{\ssup p}$ for the rate function $I_{\rm L,IB}$ defined in \eqref{Idefmicro}.
From this, we see that
$$
\lim_{N\to\infty}\E^{\ssup {N,p}} \Big(\frac{A_N}{N}\Big| S_N=\lfloor Ns\rfloor\Big)=\underset{a}{\mathrm{argmin}}\Big(\inf_r I_{\rm L,IB}^{\ssup p}(a,s,r)\Big).
$$
(The latter can also be derived from Theorem~\ref{thm-LDPALOHA} instead from the unproved Remark~\ref{rem-highprec}.)
Given $s$, we now define $a_p(s)$ as a minimizer of the map $a\mapsto\inf_r I_{\rm L,IB}^{\ssup p}(a,s,r)$, i.e., the typical rate of sending attempts, conditional on having $\approx s N$ successes. It will turn out that $a_p(s)$ is well-defined at least in a neighbourhood of $a_p(s_p)$ if $p$ is close enough to $p_*=p_*({\rm L,IB})$, where we now abbreviate $s_p=s_{\rm L,IB}(p,\kappa)$ for the minimizer that we established in Corollary \ref{cor-LLNSALOHA}, and $p^*$ is the maximizing $p$ for $(0,\infty)\ni p\mapsto s_p$ characterized by \eqref{pMax}. In terms of these quantities, the question now reads: Given $s<s_p$ , is it true that $a_p(s)<a_p(s_p)$?

\begin{theorem}\label{thm-conditionalthroughput} Fix $\kappa$. Then, for any $p\in(0,\infty)$ and for any  $s$ in some neighbourhood of $s_p$, we have
\begin{eqnarray}
p< p^*\quad&\Longrightarrow&\quad\Big[s<s_p\Rightarrow a_p(s)<a_p(s_p)\Big]\quad\mbox{and}\quad \Big[s>s_p\Rightarrow a_p(s)>a_p(s_p)\Big]\label{psmall},
\\
p>p^*\quad&\Longrightarrow&\quad\Big[s<s_p\Rightarrow a_p(s)>a_p(s_p)\Big]\quad\mbox{and}\quad \Big[s>s_p\Rightarrow a_p(s)<a_p(s_p)\Big].
\label{plarge}
\end{eqnarray}
Furthermore, for $p=p_*$, for any $s\in[0,p_*b]\setminus \{s_{p_*}\}$, we have  $a_{p^*}(s)>a_{p^*}(s_{p^*})$.
\end{theorem}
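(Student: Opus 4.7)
My plan is to reduce the abstract description $a_p(s)=\arg\min_a\inf_r I_{\rm L,IB}^{(p)}(a,s,r)$ to a tractable one-parameter family of tilted Poisson measures via a Lagrangian analysis, and then split into two cases according to whether the first derivative $\tfrac{da_p}{ds}\big|_{s=s_p}$ is nonzero (cases $p<p^*$ and $p>p^*$) or vanishes (case $p=p^*$).

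\textbf{Parametrization.} I would first apply Lagrange multipliers to the inner infimum $\inf\{H(\mu|\Poi_\alpha)\colon \sum_k k\mu_k=a,\ \sum_{k\le\kappa}k\mu_k=s\}$ (with $\alpha:=bp$). The minimizer has the form $\mu^*(k)\propto \Poi_\alpha(k)\exp(-\lambda k-\eta k\1\{k\le\kappa\})$, and minimizing further over $a$ forces $\lambda=0$, leaving a one-parameter family indexed by $\beta:=\alpha e^{-\eta}\in(0,\infty)$. Writing $E_n(x):=\sum_{k=0}^n x^k/k!$ and $Z(\beta):=1+e^{-\alpha}[E_\kappa(\beta)-E_\kappa(\alpha)]$, a short computation gives
\[
s(\beta)=\frac{e^{-\alpha}\beta\,E_{\kappa-1}(\beta)}{Z(\beta)}, \qquad a_p(s(\beta))=s(\beta)+\frac{\alpha-s_p}{Z(\beta)}.
\]
Setting $\beta=\alpha$ recovers $\mu^*=\Poi_\alpha$, so $s=s_p$ and $a=\alpha=bp$, in agreement with Corollary~\ref{cor-LLNSALOHA}. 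Moreover $s'(\beta)>0$, since $s(\beta)$ is the expectation of $f(X):=X\1\{X\le\kappa\}$ in an exponential family in the parameter $-\eta$, and $Z(\beta)>0$ throughout $(0,\infty)$.

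\textbf{Case $p\neq p^*$.} By the chain rule, $\tfrac{da_p}{ds}\big|_{s_p}=\tfrac{a'(\beta)}{s'(\beta)}\big|_{\beta=\alpha}$. Using $Z(\alpha)=1$, $Z'(\alpha)=e^{-\alpha}E_{\kappa-1}(\alpha)$, and the identity $E_{\kappa-1}(\alpha)-E_{\kappa-2}(\alpha)=\alpha^{\kappa-1}/(\kappa-1)!$, the numerator telescopes to
\[
\frac{da_p}{ds}\Big|_{s_p}=\frac{e^{-\alpha}\bigl[E_{\kappa-1}(\alpha)-\alpha^\kappa/(\kappa-1)!\bigr]}{s'(\alpha)},
\]
which equals $(bs'(\alpha))^{-1}\tfrac{ds_p}{dp}$; the quantity $\tfrac{ds_p}{dp}$ is already computed during the proof of Lemma~\ref{lem-optimalp}. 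By that lemma, $\tfrac{ds_p}{dp}$ is positive on $(0,p^*)$ and negative on $(p^*,\infty)$, so $\tfrac{da_p}{ds}\big|_{s_p}$ inherits the sign of $p^*-p$; combined with $a_p(s_p)=bp$, this immediately yields \eqref{psmall} and \eqref{plarge} on a neighbourhood of $s_p$.

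\textbf{Case $p=p^*$ (the main obstacle).} Here the linearized analysis gives no information and the quadratic behaviour of $a(\beta)-\alpha^*$ must be exposed directly. Multiplying through by $Z(\beta)$ and substituting $E_\kappa=E_{\kappa-1}+\beta^\kappa/\kappa!$ gives
\[
(a(\beta)-\alpha^*)\,Z(\beta)=e^{-\alpha^*}(\beta-\alpha^*)\,\Psi(\beta), \quad \Psi(\beta):=\sum_{i=0}^{\kappa-1}\beta^i\Bigl[\frac{1}{i!}-\frac{(\alpha^*)^{\kappa-i}}{\kappa!}\Bigr].
\]
Identity \eqref{pMax} is precisely $\Psi(\alpha^*)=0$, so $\Psi(\beta)=(\beta-\alpha^*)\tilde\Psi(\beta)$ and the right-hand side becomes $e^{-\alpha^*}(\beta-\alpha^*)^2\,\tilde\Psi(\beta)$; it suffices to show $\tilde\Psi>0$ on $(0,\infty)$ for $\kappa\ge 2$. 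For this I would observe that the coefficients $c_i:=1/i!-(\alpha^*)^{\kappa-i}/\kappa!$ of $\Psi$ admit at most one sign change in $i$, because $c_i>0\iff G_i>\alpha^*$ with $G_i:=(\kappa!/i!)^{1/(\kappa-i)}$, the geometric mean of $\{i+1,\dots,\kappa\}$, strictly increasing in $i$. Descartes' rule of signs then forces $\Psi$ to have at most one positive real root, which must be $\alpha^*$, so $\tilde\Psi$ has no positive zeros. Since $c_{\kappa-1}=(\kappa-\alpha^*)/\kappa!>0$ (by $\alpha^*<\kappa$) and not all $c_i$ can be non-negative (else $\Psi(\alpha^*)>0$, contradicting $\Psi(\alpha^*)=0$), we must have $c_0<0$, giving $\tilde\Psi(0)=-c_0/\alpha^*>0$ and hence $\tilde\Psi>0$ throughout $(0,\infty)$. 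The decisive structural ingredient is the monotonicity of the geometric means $G_i$, which via Descartes' rule rules out any extra positive root of $\Psi$; without it one would need a case-by-case analysis of polynomial roots.
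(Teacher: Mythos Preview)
Your argument is correct. The reduction to a one-parameter tilted family and the computation of $\frac{da_p}{ds}\big|_{s_p}$ match the paper's proof essentially line for line (your $\lambda,\eta,\beta$ correspond to the paper's $-B,-C,\alpha e^{C}$, and both identify $a_p'(s_p)$ with a positive multiple of $\frac{d}{dp}s_p$, then invoke Lemma~\ref{lem-optimalp}).

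The genuinely different part is the case $p=p^*$. The paper writes the difference $a_{p^*}(s)-a_{p^*}(s_{p^*})$ as a ratio whose numerator is $g(C)=\sum_{k\le\kappa}q_k(\e^{Ck}-1)(k-a_{p^*})$, observes $g(0)=g'(0)=0$, and then shows $g'(C)<0$ for $C<0$ and $g'(C)>0$ for $C>0$ by a direct comparison: splitting the sum at $k=a_{p^*}$ and bounding $\e^{Ck}$ by $\e^{Ca_{p^*}}$ on each side. Your route instead performs an \emph{algebraic} factorisation $(a(\beta)-\alpha^*)Z(\beta)=\e^{-\alpha^*}(\beta-\alpha^*)^2\tilde\Psi(\beta)$ and reduces positivity to showing that the degree-$(\kappa-1)$ polynomial $\Psi$ has $\alpha^*$ as its unique positive root; you obtain this from Descartes' rule via the neat observation that $c_i>0\iff G_i>\alpha^*$ with $G_i$ the geometric mean of $\{i+1,\dots,\kappa\}$, which is monotone in $i$. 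This is a cleaner structural argument than the paper's pointwise estimate: it gives the global conclusion on all of $(0,\infty)$ in one stroke and makes transparent \emph{why} $\alpha^*$ is a double zero (namely, \eqref{pMax} is exactly $\Psi(\alpha^*)=0$). The paper's estimate, on the other hand, avoids any appeal to root-counting and is perhaps more robust if one wanted to perturb the model.

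One small remark: both your argument and the paper's tacitly require $\kappa\ge 2$ for the strict inequality at $p=p^*$. For $\kappa=1$ one has $\alpha^*=1$ and your $\Psi$ is the constant $c_0=1-\alpha^*=0$, so $a_{p^*}(s)\equiv\alpha^*$ identically; the paper's $g$ likewise vanishes identically. You flag this explicitly, which is appropriate.
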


The proof is in Section \ref{sec-fewsuccesses}. Theorem \ref{thm-conditionalthroughput} says that, for non-optimal $p$, if $s$ sufficiently close to the optimal $s_p$, then the attempt number $a_p(s)$ deviates to the same side of $a_p(s_p)$ as $s$ is with respect to $s_p$, while in the optimal $p_*$, the typical attempt number for non-optimal success number is always larger than the optimal one. The latter means that, for the optimal choice $p=p_*$, the event of non-optimal throughput alway comes with overwhelming probability from too many attempts. Apparently, here the conditional probability for having too many attempts is much larger than the one for having too few.

\subsection{Literature remarks}\label{sec-literature}

A wide range of multiple access protocols have been extensively discussed in the literature; see for example \cite{RS90, BG92, lakatos, throughputevaluation}.
See \cite{HLS16, Y91, TTH18, I11} for an explanation of the advantages and disadvantages of multi-channel ALOHA protocols from a operational point of view and a description of transmit-reference modulation (TR Modulation) for handling the problem of synchronizing simultaneous message transmissions in such systems. \cite{HLS12} gives some probabilistic analysis of a few concrete ALOHA variants, but fails to give tractable formulas; Model (B) there is identical to our Scenario (MC) under Rule (L). In \cite{C20,C20b}, additional functionalities are investigated as a possible improvement of the throughput  by means of an additional exploration phase.

A systematic probabilistic analysis of the performance of ALOHA protocols has been started for the single-channel pure ALOHA in the 1950s; see 
\cite{abramson, performance} and some of the above mentioned references. The throughput is identified there as $\lambda \e^{-2\lambda}$, which also coincides with our result for $s_{\rm ALOHA}(\lambda,1)$ in the special case $\kappa=1$. In \cite{performance}, \cite{lakatos}, \cite{RS90} and \cite{throughputevaluation} one can also read about the more popular and better known single-channel version of ALOHA, namely the {\em slotted ALOHA}, which offers the higher throughput $\lambda \e^{-\lambda}$. The multi-channel case of this model has also been studied, e.g., in \cite{shen}, where the throughput $\lambda \e^{-\lambda/\kappa}$ has been calculated. In the present paper, we re-derive this value and combine it with a large-deviation analysis with explicit rate functions. 

To the best of our knowledge, in {\it continuous} time there are no results for the multi-channel model in the literature yet that are similar to those of the present paper, with the recent exception \cite{KS22}, where the ALOHA and the {\it Carrier Sense Multiple Access (CSMA) protocol} are analysed and similar results are derived as in the present paper for slotted ALOHA in discrete time. The difference is that in the interference constraint is valid in any fixed time interval, but not only in all the determined micro time slots. Hence, \cite{KS22} does not find a description in terms of independent random variables, but in terms of a Markov renewal process.

\section{Proofs of the LDPs}\label{sec-ProofsLDP}

\subsection{Proof of Theorem \ref{thm-LDPALOHA} for Rule (L)}\label{sec-ProofLDPmicroslot}

In this section, we prove the LDP for Scenario (IB) under Rule (L). Recall that we write $[k]=\{1,\dots,k\}$ for $k\in\N$.

For $i\in [bN]$ and $j\in[N]$, we let $X^{\ssup j}_i\in\{0,1\}$  be the indicator on the event that the $i$-th participant chooses to attempt to send a message in the $j$-th time slot. All these random variables are independent Bernoulli random variables with parameter $p/N$. Let 
$$
A^{\ssup j}_N=\sum_{i\in[bN]}\1\{X^{\ssup j}_i=1\},\qquad R^{\ssup j}_N=\1\{A^{\ssup j}_N\leq \kappa\},\qquad S^{\ssup j}_{N}=A^{\ssup j}_N\1\{A^{\ssup j}_N\leq \kappa\}.
$$
Then $A_N^{\ssup j}$ is the number of transmission attempts, $R_N^{\ssup j}$ the indicator on the event that the $j$-th micro slot is successful and $S_N^{\ssup j}$ is the number of successfully sent messages during that time slot. Clearly, $A^{\ssup j}_N$ is binomially distributed with parameters $bN$ and $p/N$, and the collection of them over $j\in[N]$ is independent. Furthermore, $A_N=\sum_{j=1}^{N}A^{\ssup j}_N$, $R_N=\sum_{j=1}^{N}R^{\ssup j}_N$ and $S_N=\sum_{j=1}^{N}S^{\ssup j}_N$.  We introduce the empirical measure
$$
\mu_N:=\frac{1}{N}\sum_{j=1}^{N}\delta_{A^{\ssup j}_N},
$$
which is a random member of the set $\Mcal_1(\N_0)$ of probability measures on $\N_0$. Furthermore, we introduce
$$
f\colon \N_0\to\N_0\times [\kappa]\times\{0,1\},\qquad f(a)=\big(a, a\1\{a\leq \kappa\},\1\{a\leq \kappa\}\big).
$$
Note that the triple under interest, $(A_N, R_N,S_N)$, is nothing but $N\langle f,\mu_N\rangle$, i.e., the image of $\mu_N$ under the map $\mu\mapsto\langle f,\mu\rangle$. 

We abbreviate $q_k={\rm Poi}_{bp}(k)=\e^{-pb}(pb)^k/k!$ and $q=(q_k)_{k\in\N_0}$. If $A_N^{\ssup j}$ would be exactly ${\rm Poi}_{bp}$-distributed, then Sanov's theorem would  imply that $(\mu_N)_{N\in\N }$ satisfies an LDP with rate function $\mu\mapsto H(\mu|{\rm Poi}_{bp})$. Let us assume for a moment that $\langle f,\widetilde\mu_N\rangle$ satisfies an LDP with rate function given in \eqref{Idefmicro} if $\widetilde \mu_N$ is the empirical measure of independent ${\rm Poi}_{bp}$-distributed random variables $A^{\ssup 1},\dots,A^{\ssup N}$. We show that $\langle f,\mu_N\rangle$ and $\langle f,\widetilde\mu_N\rangle$ are exponentially equivalent as $N\to\infty$ and therefore satisfy the same LDP, namely the LDP of Theorem \ref{thm-LDPALOHA} for D$=$L with rate function  given in \eqref{Idefmicro}. For this, it suffices to show that, for a suitable coupling of the $A^{\ssup 1},\dots,A^{\ssup N}$ with the $A^{\ssup 1}_N,\dots,A^{\ssup N}_N$,
\begin{equation}\label{expequiv}
\lim_{N\to\infty}\frac 1N\log \P\Big(\sum_{j=1}^N |A_N^{\ssup j}-A^{\ssup j}|>\eps N\Big)=-\infty,\qquad \eps>0,
\end{equation}
since the second and third components of $f$ are smaller than the first one. We will show this for any coupling of these variables such that $\lim_{N\to\infty}\P(A_N^{\ssup 1}\not=A^{\ssup 1})=0$. We use Markov's inequality (or the exponential Chebyshev inequality) and the independence, to estimate, for any $C>0$,
$$
\P\Big(\sum_{j=1}^N |A_N^{\ssup j}-A^{\ssup j}|>\eps N\Big)
\leq \e^{-C\eps N}\E\Big[\e^{C |A_N^{\ssup 1}-A^{\ssup 1}|}\Big]^N.
$$
We are finished as soon as we have shown that $\lim_{N\to\infty}\E[\e^{C |A_N^{\ssup 1}-A^{\ssup 1}|}]=1$ for any $C>0$. In the expectation, we estimate $1\leq \1\{A_N^{\ssup 1}\leq K,A^{\ssup 1}\leq K\}+\1\{A^{\ssup 1}_N>K\}+\1\{A^{\ssup 1}>K\}$ and use once more the exponential Chebyshev inequality and then H\"{o}lder's inequality to obtain, for any $L>0$,
$$
\begin{aligned}
\E[\e^{C |A_N^{\ssup 1}-A^{\ssup 1}|}]&
\leq 1+\e^{2CK}\P(A_N^{\ssup 1}\not=A^{\ssup 1})\\
&\qquad+\e^{-L K}\Big(\sqrt{\E[\e^{2(C+L)A_N^{\ssup 1}}]\E[\e^{2C A^{\ssup 1}}]}+\sqrt{\E[\e^{2(C+L)A^{\ssup 1}}]\E[\e^{2C A_N^{\ssup 1}}]}\Big)\\
&\to 1+2\e^{-L K} \e^{bp(\e^{2(C+L)}-1)} \e^{bp(\e^{2C}-1)},\qquad N\to\infty,
\end{aligned}
$$
as an explicit calculation for the exponential moments of $A_N^{\ssup 1}$ and $A^{\ssup 1}$ shows. We pick now $L=1$ and make $K\to\infty$ to see that the right-hand side converges to one, which concludes the proof of \eqref{expequiv}.

It remains to show that $\langle f,\widetilde\mu_N\rangle$ satisfies an LDP with rate function given in \eqref{Idefmicro}. Then Sanov's theorem implies that $(\widetilde\mu_N)_{N\in\N}$ satisfies an LDP on $\Mcal_1(\N_0)$ with rate function $\mu\mapsto H(\mu|\Poi_{bp})$. 
If the map  $\mu\mapsto\langle f,\mu\rangle$  would be continuous in the weak topology on $\Mcal_1(\N_0)$, then the contraction principle immediately would give the assertion. However, clearly $f$ is not bounded, hence the map $\mu\mapsto\langle f,\mu\rangle$ is not continuous in the weak topology on $\Mcal_1(\N_0)$. Hence we cannot directly apply the contraction principle. Clearly, the second and third argument in the function are bounded. A sufficient cutting argument for the first argument is  given by proving that
\begin{equation}\label{cutting}
\lim_{C\to\infty}\limsup_{N\to\infty}\frac 1N\log \P_N\Big(\sum_{j=1}^N A^{\ssup j}>CN\Big)=-\infty.
\end{equation}
A proof of \eqref{cutting} is easily  derived using the exponential Chebyshev inequality as above and that $A^{\ssup 1},\dots,A_N^{\ssup N}$ are independent $\Poi_{bp}$-distributed random variables and that $\E[\e^{C A^{\ssup 1}}]= \e^{pb(\e^C-1)}$ for any $C$. Hence, modulo elementary technical details, the proof of Theorem \ref{thm-LDPALOHA} for Rule (L) follows from this.

\subsection{Proof of Theorem \ref{thm-LDPALOHA} under Rule (G)}\label{sec-ProofLDPmicroslot-GI}

In this section, we prove the LDP for Scenario (IB) under Rule (G).

We want to identify the large deviation behaviour of the probability distribution of the triple $(A_N,S_N,R_N)$ under $\P_{\rm {G,IB}}^{\ssup N}$. We have it already under $\P_{\rm {L,IB}}^{\ssup N}$. We are going to identify the former distribution now explicitly in terms of the latter.

For any $a, s,r\in\N_0$ we have the following;
\begin{equation}
\begin{aligned}
d_N&=\P_{\rm {G,IB}}^{(N)}\big(A_N= a,S_N=s, R_N= r\big)
\\&=\P_{\rm {G,IB}}^{\ssup N}(A_N=a)\P_{\rm {G,IB}}^{\ssup N}(S_N=s, R= r|A_N=a)
\\&=\P_{\rm {L,IB}}^{\ssup N}(A_N= a)\P_{\rm {L,IB}}^{\ssup N}(S_N=s, R_N=r|A_N=a)\frac{\P_{\rm {G,IB}}^{\ssup N}(A_N=a)}{\P_{\rm {L,IB}}^{\ssup N}(A_N=a)},
\end{aligned}
\end{equation}
where we used that $\P_{\rm {G,IB}}^{\ssup N}(S_N=s, R_N= r|A_N= a)=\P_{\rm {L,IB}}^{\ssup N}(S_N=s, R_N=r|A_N= a)$, since the success rules are the same for the local and the global access rules. Hence
\begin{equation}\label{SGI}
d_N=\P_{\rm {L,IB}}^{\ssup N}(A_N=a,S_N=s, R_N=r)\frac{\P_{\rm {G,IB}}^{\ssup N}(A_N=a)}{\P_{\rm {L,IB}}^{\ssup N}(A_N=a)}.
\end{equation}
Hence, the two rate functions $I_{\rm L,IB}$ and $I_{\rm G,IB}$ differ only by the exponential rate of the quotient. The latter is easily identified. Indeed, observe that $A_N$ is $\Bin_{bN,p}$ distributed under $\P^{\ssup N}_{\rm {G,IB}}$, hence, if $a_N\in\frac 1N \N_0$ satisfies $a_N\to a$, then Stirling's formula ($N!=(N/\e)^N\e^{o(N)}$ for $N\to\infty$) shows that
\begin{equation}
J_{\rm{G}}(a):=-\lim_{N\to\infty}\frac 1N\log \P^{\ssup N}_{\rm G,IB}\big(A_N=N a_N\big)=a\log\frac{a}{p}+(b-a)\log\frac{b-a}{1-p}-b\log b.
\end{equation}
Furthermore, under $\P^{\ssup N}_{\rm {L,IB}}$, $A_N$ is distributed as the sum of $N$ independent $\Bin_{bN,p/N}$-distributed random variables. 
We showed in Section~\ref{sec-ProofLDPmicroslot} (see \eqref{expequiv}) that $A_N$ is exponentially equivalent with a sum of $N$ independent $\Poi_{bp}$-distritbuted random variables, hence $A_N$ satisfies an LDP with the same rate function, more precisely,
\begin{equation}
J_{\rm{L}}(a):=-\lim_{N\to\infty}\frac 1N\log \P^{\ssup N}_{\rm L,IB}\big(A_N=N a_N\big)= pb-a+a\log\frac{a}{pb}.
\end{equation}
Hence, $\frac 1N(A_N,S_N,R_N)$  under $\P^{\ssup N}_{\rm {G,IB}}$ satisfies an LDP with rate function 
$$
I_{\rm {G,IB}}(a,s,r)=I_{\rm {L,IB}}(a,s,r)-J_{\rm G}(a)+J_{\rm L}(a),
$$ 
and this is equal to right hand side of \eqref{Idef}.

\subsection{Alternate proof of Theorem \ref{thm-LDPALOHA} under Rule (L)}\label{sec-ProofThm1} 

In this section, we indicate an alternative proof of the LDP of Theorem \ref{thm-LDPALOHA} in Scenario (IB) under Rule (L) with an alternate representation of the rate function that is very different from \eqref{Idefmicro}; see \eqref{ratefctalternate}. Indeed, it does not involve any entropy, but is instead based on formulas that appear in connection with Cram\'er's theorem, i.e., Legendre transforms of the logarithm of moment generating functions.

We use the notation of Section~\ref{sec-ProofLDPmicroslot}. Recall that $A_N^{\ssup j}$ is the number of emission attempts in the $j$-th micro time slot, $(\frac{j-1}N,\frac jN]$. Then $A_N^{\ssup 1},\dots,A_N^{\ssup N}$ are i.i.d., and each of them is $\Bin_{bN,p/N}$-distributed. Fix $a,s,r\in\N_0$ and consider the event $\{A_N=a,S_N=s,R_N=r\}$. This is the event that in precisely $r$ time slots the corresponding $A_N^{\ssup j}$ is $\leq \kappa$ (these time slots are successful) and in all the other $N-r$ time slots it is $>\kappa$ (these slots are unsuccessful), and that the total sum of all the $A_N^{\ssup j}$ with $A_N^{\ssup j}$ is equal to $s$. By permutation symmetry of the time slots, we may assume that all the first $r$ time slots are successful and the remainning ones are not. The total number of distinctions of the $N$ slots into $r$ successful and $N-r$ unsuccessful ones is $\binom Nr$. Hence, by independence of the $A_N^{\ssup j}$'s and after relabeling, we have
\begin{equation}\label{d1}
\begin{aligned}
\P^{\ssup N}_{\rm L,IB}&(A_N=a,S_N=s,R_N=r)\\
&=\binom{N}{r}\P\Big(A^{\ssup j}_{N}\le\kappa \;\forall j\in[r], \sum_{j\in[r]}A^{\ssup j}_{N}=s\Big)\\
&\qquad\times\P\Big(A^{\ssup j}_{N}>\kappa \;\forall j\in[N-r], \sum_{j\in[N-r]}A^{\ssup j}_{N}=a-s\Big)\\
&=\binom{N}{r}{\rm Bin}_{bN,p/N}([0,\kappa])^{r}\,{\tt P}^{\ssup N}_{\le\kappa}\Big(\frac{1}{r}\sum_{j\in[r]}A^{\ssup j}_{N}=\frac{s}{r}\Big)\\
&\qquad \times {\rm Bin}_{bN,p/N}((\kappa, \infty))^{N-r}\,{\tt P}^{\ssup N}_{>\kappa}\Big(\frac{1}{N-r}\sum_{j\in[N-r]}A^{\ssup j}_{N}=\frac{a-s}{N-r}\Big),
\end{aligned}
\end{equation}
where ${\tt P}^{\ssup N}_{\leq \kappa}$ is the expectation with respect to independent ${\rm Bin}_{bN,p/N}$-distributed variables, conditioned on being $\leq \kappa$, and ${\tt P}^{\ssup N}_{>\kappa}$ is defined analogously.

Now the remainder of the proof is clear. We replace $a,s,r\in\N$ by $a_N N, s_N N, r_NN \in\N$ with $a_N\to a$, $s_N\to s $ and $r_N\to r$ for some $a,s,r\in(0,\infty)$ and we find easily the large-$N$ exponential asymptotics of the binomial term and the two probability powers, and for the two probabilities involving the sums of $A_N^{\ssup j}$'s, we can use Cram\'er's theorem. Here are some details: We again use the Poisson limit theorem to see that ${\rm Bin}_{bN,p/N}([0,\kappa])^{r_NN}={\rm Poi}_{pb}([0,\kappa])^{rN}\e^{o(N)}$ as $N\to\infty$ and the analogous statement for the other probability term. Furthermore, we leave to the reader to check that the average of the $A_N^{\ssup j}$ under ${\tt P}^{\ssup N}_{\leq \kappa}$ satisfy the same LDP as the average of independent ${\rm Poi}_{bp}$-distributed random variables, conditioned on being $\leq \kappa$ and analogously with $>\kappa$ instead of $\leq \kappa$. (This is implied by a variant the exponential equivalence that we proved in Section~\ref{sec-ProofLDPmicroslot}: see \eqref{expequiv}.) The latter do satisfy an LDP, according to Cram\'er's theorem, with rate function equal to the Legendre transform of $y\mapsto \log {\tt E}_{\leq \kappa}[\e^{ y X_1}]$, where ${\tt E}_{\leq \kappa}$ is the expectation with respect to ${\tt P}_{\leq \kappa}$, and $X_1$ is a corresponding random variable. Hence we have that $\frac{1}{r_N N}\sum_{j\in[r_N N]}A^{\ssup j}_{N}$ satisfies an LDP under ${\tt P}^{\ssup N}_{\le\kappa}$ on the scale $N$ with rate function
$$
x\mapsto =r J_{\leq \kappa}(x),\qquad\mbox{where}\qquad J_{\leq \kappa}(x)=\sup_{y\in\R}\Big(x y-\log {\tt E}_{\leq \kappa}[\e^{ y X_1}]\Big),
$$
and an analogous assertion for the other probability term (last line of \eqref{d1}).
Note that Stirling's formula gives that $-\lim_{N\to\infty}\frac 1N\log\binom N{r_N N}=r\log r +(1-r)\log(1-r)$. Substitution all this in  the last two lines of \eqref{d1}, we obtain that $\frac 1N(A_N,S_N,R_N)$ satisfies under Rule (L) in Scenario (IB)  an LDP on the scale $N$ with rate function equal to
$$
\begin{aligned}
\widetilde I_{\rm L,IB}(a,s,r)&=r\log r +(1-r)\log(1-r)
+r J_{\leq \kappa}({\smfrac sr})-r\log \Poi_{bp}([0,\kappa])\\
&\qquad +(1-r) J_{>\kappa}({\smfrac {a-s}{1-r}})-(1-r)\log \Poi_{bp}((\kappa,\infty)).
\end{aligned}
$$
This can be rewritten as follows. Introducing $I_{\leq \kappa}(x)=\sup_{z\in\R}(xz-\log\sum_{i=0}^\kappa\e^{zi}/i!)$, we see, after making the substitution $\e^z=bp\e^y$, i.e., $y=z-\log(pb),$ that
$$
r J_{\leq \kappa}({\smfrac sr})-r\log \Poi_{bp}([0,\kappa])=rbp-s\log(bp)+r I_{\leq\kappa}({\smfrac s r}),
$$
and an analogous formula for the last term, resulting in
\begin{equation}\label{ratefctalternate}
\widetilde I_{\rm L,IB}(a,s,r)=r I_{\leq \kappa}({\smfrac sr})+(1-r) I_{>\kappa}({\smfrac {a-s}{1-r}})+bp-a\log(bp)+r\log r +(1-r)\log(1-r).
\end{equation}
Certainly, this function must coincide with $I_{\rm L,IB}$ defined in \eqref{Idefmicro}, but this is admittedly hard to see.

\subsection{Proof of Theorem \ref{thm-LDPMultichannel} under Rule (L)}\label{sec-ProofLDPmicroslot-LMC}

We are now proving the LDP of Theorem  \ref{thm-LDPMultichannel} in Scenario (MC) under the Rule (L). We recall some of the notation from Section \ref{sec-ProofLDPmicroslot}: for $i\in [bN]$ and $j\in[N]$, we let $X^{\ssup j}_i\in\{0,1\}$  be the indicator on the event that the $i$-th participant chooses to attempt to send a message in the $j$-th time slot. All these random variables are independent Bernoulli random variables with parameter $p/N$. Let $A^{\ssup j}_N=\sum_{i\in[bN]}\1\{X^{\ssup j}_i=1\}$, then $A_N^{\ssup j}$ is the number of transmission attempts. Clearly, $A^{\ssup j}_N$ is binomially distributed with parameters $bN$ and $p/N$, and the collection of them over $j\in[N]$ is independent. Furthermore, $A_N=\sum_{j=1}^{N}A^{\ssup j}_N$. 

Let us identify the distribution of the number $S_N^{\ssup j}$ of successes in the $j$-th slot given that there are $a=A_N^{\ssup j}$ attempts. We observe that the vector of numbers $(Z_1,\dots,Z_\kappa)$ of message transmission attempts $Z_k$ in the $k$-th channel is multinomially distributed with parameter $a=\sum_{k\in[\kappa]}Z_k$ and $\kappa$. This means, for any $\alpha\in(0,\infty)$, that
\begin{equation}\label{identidistSN}
\begin{aligned}
\P^{\ssup N}_{{\rm L,MC}}\big(S_N^{\ssup j}=s|A_N^{\ssup j}=a\big)
&=\sum_{\substack{z_1,\cdots,z_\kappa\in\N_0\colon \sum_k z_k=a \\ \sum_k \1_{\{z_k=1\}}=s}}\kappa^{-a}\binom {a}{(z_k)_k}\\
&=\frac{a!}{\kappa^\alpha}\alpha^{-a}\e^{\alpha\kappa}\sum_{\substack{z_1,\cdots,z_\kappa\in\N_0\colon \sum_k z_k=a \\ \sum_k \1_{\{z_k=1\}}=s}}\prod_{j\in[\kappa]}\left( \frac{\alpha^{z_k}}{z_k!} \e^{-\alpha}\right)\\
&=\frac{1}{\Poi_{\alpha\kappa}(a)}\Poi_{\alpha}^{\otimes \kappa}\Big(\sum_{k\in[\kappa]}X_k=a, \sum_{k\in[\kappa]}\1_{\{X_k=1\}}=s \Big),
\end{aligned}
\end{equation}
where $X_1,\dots,X_\kappa$ are independent $\Poi_\alpha$-distributed variables.  We obtain for the joint distribution of $A_N^{\ssup j}$ and $S_N^{\ssup j}$ that 
\begin{equation}
\P^{\ssup N}_{{\rm L,MC}}\big(A_N^{\ssup j}=a, S_N^{\ssup j}=s)=\frac{\Bin_{bN,p/N}(a)}{\Poi_{\alpha\kappa}(a)}\Poi_{\alpha}^{\otimes \kappa}\Big(\sum_{k\in[\kappa]}X_k=a, \sum_{k\in[\kappa]}\1_{\{X_k=1\}}=s \Big),\qquad (a,s)\in\Xi.
\end{equation}
We now pick $\alpha=bp/\kappa$ and observe that the quotient on the right-hand side then  converges towards one as $N\to\infty$, according to the Poisson limit theorem. Furthermore, the last term was introduced in \eqref{Rdef} under the name $M(a,s)$. Hence, the pair $(A_N,S_N)$ is equal to the sum of $N$ independent copies of a pair with distribution $M_N$ that converges pointwise towards $M$ as $N\to\infty$. Analogously to the corresponding part in Section~\ref{sec-ProofLDPmicroslot} (see around \eqref{expequiv}), one shows that $\frac 1N(\widetilde A_N,\widetilde S_N)$ and $\frac 1N(A_N,S_N)$ are exponentially equivalent, where the former is $\frac 1N $ times a sum of  $N$ independent random vectors $(A^{\ssup 1},S^{\ssup 1}),\dots,(A^{\ssup N},S^{\ssup N})$ with distribution $M$ each. Hence both satisfy the same LDP, if any of them satisfies some.

Indeed, $\frac 1N(\widetilde A_N,\widetilde S_N)$ does satisfy the LDP of Theorem \ref{thm-LDPMultichannel} under Rule (L), as is seen in the same way as in Section~\ref{sec-ProofLDPmicroslot}. One uses that the empirical measure $\widetilde \mu_N=\frac 1N\sum_{j=1}^N\delta_{(A^{\ssup j},S^{\ssup j})}$ satisfies an LDP with rate function $\mu\mapsto  H(\mu|M)$ and that $\frac 1N(\widetilde A_N,\widetilde S_N)=\sum_{(i,j)\in\Xi}\widetilde\mu_N(i,j) (i,j)$ is a function of $\widetilde \mu_N$ that is, after applying some cutting procedure, continuous. Then the contraction principle implies that  $\frac 1N(\widetilde A_N,\widetilde S_N)$ satisfies the LDP of Theorem \ref{thm-LDPMultichannel} under Rule (L).

\subsection{Proof of Theorem \ref{thm-LDPMultichannel} under Rule (G)}\label{sec-ProofLDPmicroslot-GMC}

In this section, we prove the LDP for $\frac 1N(A_n,S_N)$ in Scenario (MC) under Rule (G). We are able to use the identification of their distribution from Section~\ref{sec-ProofLDPmicroslot-LMC} here for a different choice of parameters. Indeed, recall that $A_N$ is $\Bin_{bN,p}$-distributed. Given that $A_N=a$ attempts are made during the entire time interval $[0,1]$, each of the $a$ attempts makes a random and uniform choice among  $N$ time slots and $\kappa$ channels altogether. Furthermore, in each channel in each slot, the success criterion is that no more than one choice is made here. This means that the distribution of $S_N$ given $\{A_N=a \}$ is the same as in \eqref{identidistSN} with $\kappa N$ instead of $\kappa$. Again, we choose $\alpha=bp/\kappa$. Hence, for any $(a,s)\in\Xi$, 
	\begin{equation}\label{distidentMCG}
	\begin{aligned}
\P_{\rm {G,IB}}^{\ssup N}\big(A_N= a,S_N=s\big)=\frac{\Bin_{bN,p}(a)}{\Poi_{bpN}(a)}\Poi_{bp/\kappa}^{\otimes \kappa N}\Big( \sum_{i=1}^{\kappa N}X_i=a,\sum_{i=1}^{\kappa N}\1_{\{X_i=1\}}=s\Big).
\end{aligned}
	\end{equation}
We use this now for $(a,s)$ replaced by $(a_N N,s_N N)\in\N^2$ with $a_N\to a$ and $s_N\to s$ for some $(a,s)\in\Xi$ and see that the quotient on the right-hand side behaves like
$$
\begin{aligned}
\lim_{N\to\infty}\frac 1N\log \frac{\Bin_{bN,p}(a_NN)}{\Poi_{bpN}(a_NN)}
&=\lim_{N\to\infty}\frac 1N\log \frac{(bN/\e)^{bN}p^{aN}(1-p)^{(b-a)N}(aN)! \e^{bpN}}
{(aN)! ((b-a)N/\e)^{(b-a)N}(bpN)^{aN}}\\
&=-\Big[a-bp+(b-a)\log\frac{1-\frac ab}{1-p}\Big],
\end{aligned}
$$
using also Stirling's formula.	

The second term on the right-hand side of \eqref{distidentMCG} is the dsitribution of the sum  of $(X_i,\1_{\{X_i=1\}})$ of $\kappa N$ independent, $\Poi_{bp/\kappa}$-distributed random variables $X_1,\dots,X_{\kappa N}$. This is a two-dimensional functional of their empirical measure $\mu_{\kappa N}$, and the latter satisfies an LDP with speed $\kappa N$ with rate function equal to $\mu\mapsto H(\mu|\Poi_{bp/\kappa})$. This functional is not a continuous one, since the identity map is not bounded, but in Section~\ref{sec-ProofLDPmicroslot} (see \eqref{cutting}) we saw how to perform a suitable cutting argument. Hence, we know that the pair $\frac 1{\kappa N}\sum_{i=1}^{\kappa N} (X_i,\1_{\{X_i=1\}})=(\langle \mu_{\kappa N},{\rm id}\rangle, \langle \mu_{\kappa N},\delta_{\{1\}}\rangle)$ satisfies, according to the contraction principle, an LDP with speed $N$ with rate function
$$
\mu\mapsto\kappa\inf\Big\{H(\mu|{\rm Poi}_{bp/\kappa})\colon\mu\in\Mcal_1(\N_0),\sum_{g\in\N_0}\mu(g)g=\frac{a}{\kappa},\mu(\{1\})=\frac{s}{\kappa} \Big\}.
$$
(The prefactor $\kappa$ comes from the change of scales from $\kappa N$ to $N$ in the LDP, and the $\kappa$ in the two denominators comes from the normalization of $\sum_{i=1}^{\kappa N}$ by $\kappa N$ instead of $N$.) Hence summarizing everything together ends the proof of Theorem~\ref{thm-LDPMultichannel} under Rule (G).

\section{Optimizing and conditioning}\label{sec-Ratefunctions}

\noindent In this section we prove Lemma~\ref{lem-optimalp} and Theorem \ref{thm-conditionalthroughput}.

\subsection{Optimizing $p\mapsto s_p$}\label{sec-optimisep}

In this section, we prove Lemma~\ref{lem-optimalp}, that is, we analyse the maximizer of the map $(0,\infty)\ni p\mapsto s_{\rm IB}(p,\kappa)$, the optimal throughput for Scenario (IB) under Rule (L). We abbreviate $s_p=s_{\rm IB}(p,\kappa)$.

The analytic function $g(a)=s_{a/b}=a \e^{-a} \sum_{i=0}^{\kappa-1}\frac{a^i}{i!}$ is positive in $(0,\infty)$ with limits $0$ at $a\downarrow 0$ and $a\to\infty$, hence it has at least one maximizer $a_*$, which is characterised by $g'(a_*)=0$. We see that (with $f_\leq(a)=\sum_{i=0}^\kappa\frac{a^i}{i!}$)
\begin{equation}\label{spderivative}
\frac{\d}{\d p} s_p=b\, \e^{-a_p}\big(f_{\leq}'(a_p)+a_p f_{\leq}''(a_p)-a_p f_\leq'(a_p)\big)=b\e^{-bp}\Big[\sum_{i\leq \kappa-1}\frac{(bp)^i}{i!}-\frac{(bp)^\kappa}{(\kappa-1)!}\Big],\qquad p>0.
\end{equation}
Hence, \eqref{pMax} characterizes the minimizer(s) $p_*$, but at this stage we do not yet know how many minimizers exist.

Using elementary calculus, we see that a solution $a_*$ to \eqref{pMax} exists since the polynomial $f(a)=-(\kappa-1)!b \e^{-a}\frac\d{\d p}s_p= a^\kappa-\sum_{i\leq\kappa-1}a^i \frac{(\kappa-1)!}{i!}$ starts with $f(0)<0$ and satisfies $f(a)\to\infty$ as $a\to\infty$. Note that, for any $a>0$, we have  
$$
\begin{aligned}
f(a)&\geq a^\kappa-\sum_{i\leq\kappa-1}a^i(\kappa-1)^{\kappa-1-i}=a^\kappa-(\kappa-1)^{\kappa-1}\sum_{i\leq\kappa-1}\Big(\frac a{\kappa-1}\Big)^i=a^\kappa+\frac{(\kappa-1)^{\kappa}-a^\kappa}{a-(\kappa- 1)}\\
&=\frac{a^\kappa(a-\kappa)+(\kappa-1)^{\kappa}}{a-(\kappa- 1)},
\end{aligned}
$$
and the latter is positive for any $a>\kappa-1$. Hence, we even have that $a_*\leq \kappa-1$. Furthermore, there is only one solution, since $f'(a)=\kappa a^{\kappa-1}-\sum_{i\leq \kappa -1}a^i \frac{(\kappa-1)!}{i!}+a^{\kappa-1}$ for any $a$, and for any solution $a_*$ we see that $f'(a_*)=(\kappa+1) a_*^{\kappa -1}-a_*^\kappa=a_*^{\kappa -1}[\kappa+1-a_*]$, which is positive. Hence, $f$ has precisely one zero in $[0,\infty)$. It is negative left of $a_*$ and positive right of it. Accordingly, $p\mapsto s_p$ is increasing in $[0, p_*]$ and decreasing in $[p_*,\infty)$. We obtain a lower bound for $a_*$ by 
$$
f(a)\leq a^\kappa -a^i\frac{(\kappa-1)!}{i!}< a^i\Big(a^{\kappa-i}-(i+1)^{\kappa-i-1}\Big),\qquad a>0, i\in\{0,\dots,\kappa-1\}.
$$
This upper bound is zero for $a=(i+1)^{1-1/(\kappa-i)}$, hence $a_*\geq \max_{i=0}^{\kappa-1}(i+1)^{1-1/(\kappa-i)}$. Taking $i=\kappa-\sqrt\kappa$ gives $a_*\geq (\kappa-\sqrt\kappa)^{1-\kappa^{-1/2}}
=\kappa(1+o(1))$ as $\kappa\to\infty$. This finishes the proof of Lemma~\ref{lem-optimalp}.

\subsection{Conditioning on successes}\label{sec-fewsuccesses}

In this section, we prove Theorem \ref{thm-conditionalthroughput}. Recall that we conceive the maximal throughput per micro slot, $s=s_p$, as a function of $p$. Recall from Lemma~\ref{lem-optimalp} that the maximal $p^*$ for $p\mapsto s_p$ is characterized by 
\begin{equation}\label{p*chracterisation}
\frac {a_p^\kappa}{(\kappa-1)!}=\sum_{i=0}^{\kappa-1}\frac{a_p^i}{i!},\qquad a_p=bp.
\end{equation}
Furthermore recall that $a_p(s)$ denotes the minimising $a$ for the map $a\mapsto \inf_r I^{\ssup p}_{\rm L,IB}(a,s,r)$, and note that $a_p=a_p(s_p)=bp$. Here we answer the question of the reason for few number of successes. The following lemma implies Theorem \ref{thm-conditionalthroughput}.

\begin{lemma}\label{lemm3} For any $p\in(0,\infty)$, we have $a_p'(s_p)<0$ for $p<p_*$ and $a_p'(s_p)>0$ for $p>p_*$. In particular, for $s$ in a neighbourhood of $s_p$, \eqref{psmall} and  \eqref{plarge} hold.

	Furthermore, for $p=p^*$, we have $a_{p^\ast}(s)>a_{p^\ast}(s_{p^\ast})=b  p_*$ for any $s\in[0,b]\setminus \{bp_*\}$.

\end{lemma}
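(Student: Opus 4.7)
The plan is to characterise the minimizer of the entropy variational problem defining $F(a,s):=\inf_{r} I_{\rm L,IB}^{\ssup p}(a,s,r)$ via Lagrange multipliers. Since the infimum over $r$ removes that constraint, $F(a,s)$ is the infimum of $H(\mu|\Poi_{bp})$ over $\mu\in\Mcal_1(\N_0)$ subject only to $\sum_k k\mu_k=a$ and $\sum_{k\leq\kappa}k\mu_k=s$; a routine Lagrangian calculation yields the exponential-tilt minimizer $\mu_k\propto q_k\exp(\lambda_1 k+\lambda_2 k\1\{k\leq\kappa\})$, with $q_k=\Poi_{bp}(k)$ and partition function $Z(\lambda_1,\lambda_2)$. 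By the envelope theorem, $\partial_a F=\lambda_1$ at the optimum, so the first-order condition for $a_p(s)$ is $\lambda_1=0$, and the remaining multiplier parametrises the critical curve $\lambda_2\mapsto (s(\lambda_2),a(\lambda_2))=(\partial_{\lambda_2}\log Z(0,\lambda_2),\partial_{\lambda_1}\log Z(0,\lambda_2))$, with $\lambda_2=0$ corresponding to $(s_p,bp)$.

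For the first part of the lemma I would compute $a_p'(s_p)$ by the chain rule as the ratio $(da/d\lambda_2)/(ds/d\lambda_2)$ at $\lambda_2=0$. These are cross-derivatives of $\log Z$ at the origin: the denominator equals $\mathrm{Var}_q(Y)>0$ and the numerator equals $\mathrm{Cov}_q(X,Y)$, where $Y:=X\1\{X\leq\kappa\}$. Since $XY=Y^2$ pointwise, $\mathrm{Cov}_q(X,Y)=\E_q[Y^2]-bp\cdot s_p$; expanding $\E_q[Y^2]=\sum_{k\leq\kappa}k^2 q_k$ via the Poisson recursion $kq_k=bp\,q_{k-1}$ and comparing with \eqref{spderivative} gives the key identity
$$\E_q[Y^2]-bp\cdot s_p \;=\; bp\,\e^{-bp}\Big[\sum_{i=0}^{\kappa-1}\frac{(bp)^i}{i!}-\frac{(bp)^\kappa}{(\kappa-1)!}\Big] \;=\; p\cdot\frac{d s_p}{d p}.$$
Combined with Lemma~\ref{lem-optimalp} this fixes the sign of $a_p'(s_p)$ on either side of $p_*$, and \eqref{psmall}--\eqref{plarge} then follow by the local monotonicity of $s\mapsto a_p(s)$ near $s_p$.

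For the global statement at $p=p^*$ the first derivative vanishes, so I would argue directly. Write
$$a(\lambda_2)-bp^* \;=\; \frac{f(\lambda_2)}{Z(0,\lambda_2)},\qquad f(\lambda_2):=\sum_{k\leq\kappa}(k-bp^*)q_k\e^{\lambda_2 k}+\sum_{k>\kappa}(k-bp^*)q_k,$$
so the claim $a_{p^\ast}(s)>bp_*$ for $s\neq s_{p^\ast}$ is equivalent to $f(\lambda_2)>0$ for $\lambda_2\neq 0$. One has $f(0)=\E_q[X]-bp^*=0$, and $f'(\lambda_2)=\sum_{k=1}^\kappa k(k-bp^*)q_k\e^{\lambda_2 k}$. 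Setting $t=\e^{\lambda_2}$, $f'$ becomes a polynomial $g(t)$ whose coefficients $c_k=k(k-bp^*)q_k$ change sign exactly once as $k$ runs through $\{1,\dots,\kappa\}$: they are negative for $k<bp^*$ and positive for $k>bp^*$, using the bound $bp^*<\kappa-1$ from Lemma~\ref{lem-optimalp}. Descartes' rule of signs then bounds the number of positive real roots of $g$ by one, and $g(1)=f'(0)=0$ by the characterisation \eqref{p*chracterisation} of $p^*$. Hence $f'<0$ on $(-\infty,0)$ and $f'>0$ on $(0,\infty)$, making $\lambda_2=0$ the unique strict global minimiser of $f$.

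The main obstacles are (i) the careful Poisson-moment bookkeeping that coerces the numerator $\mathrm{Cov}_q(X,Y)$ into the clean form $p\cdot ds_p/dp$, and (ii) the degenerate case $\kappa=1$, where the polynomial $g$ has the single coefficient $c_1=(1-bp^*)q_1$, which vanishes at $p^*=1/b$ so that $f\equiv 0$ and $a(\lambda_2)\equiv bp^*$ identically; in this case the strict inequality in the global claim collapses to an equality (so $\kappa=1$ must be read with non-strict inequality or excluded), while for $\kappa\geq 2$ the sign-change argument goes through cleanly.
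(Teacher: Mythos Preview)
Your argument is essentially the paper's proof. The exponential-tilt minimiser, the identification $\partial_a F=\lambda_1$ (whence $\lambda_1=0$ at the optimum), and the key identity $\mathrm{Cov}_q(X,Y)=p\,\tfrac{d}{dp}s_p$ all appear there with notation $(B,C,\varphi)$ in place of your $(\lambda_1,\lambda_2,\log Z)$; you merely phrase the second derivatives probabilistically as $\mathrm{Var}_q(Y)$ and $\mathrm{Cov}_q(X,Y)$ where the paper expands them term by term.

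For the global claim at $p=p^*$ your route is slightly different: you invoke Descartes' rule of signs on the polynomial $t\mapsto\sum_{k=1}^\kappa k(k-bp^*)q_k t^k$ to locate its unique positive zero at $t=1$, whereas the paper bounds $f'(\lambda_2)$ directly via the termwise inequality $\e^{\lambda_2 k}(k-bp^*)\le \e^{\lambda_2\,bp^*}(k-bp^*)$ for $\lambda_2<0$. Both arguments rest on the same two facts---the single sign change of $k\mapsto k-bp^*$ on $\{1,\dots,\kappa\}$ (using $1<bp^*<\kappa-1$) and $f'(0)=0$ from \eqref{p*chracterisation}---so they are equivalent in spirit; your packaging is arguably cleaner.

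Your observation about $\kappa=1$ is correct and is a point the paper overlooks: there $bp^*=1$, the single coefficient $c_1$ vanishes, $f\equiv 0$, and one checks directly that $a_{p^*}(s)\equiv bp^*$, so the strict inequality collapses. The paper's own comparison argument also fails in this case, since it needs some $k\le\kappa$ with $k>bp^*$, which does not exist when $\kappa=1$.
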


\begin{proof} Let us first analyse $\inf_r I^{\ssup p}_{\rm L,IB}(a,s,r)$ for fixed $a, s\in(0,\infty)$ satisfying $a> s$. We benefit from the representation in \eqref{Idef}:
We have that 
$$
\begin{aligned}
\inf_r I^{\ssup p}_{\rm L,IB}(a,s,r)&=\inf_{r}\inf\{H(\mu|{\rm Poi}_{pb})\colon \langle f,\mu\rangle=(a,s,r) \}\\
&=\inf\Big\{H(\mu|{\rm Poi}_{pb})\colon \sum_{k=0}^{\infty}k\mu_k=a, \sum_{k=0}^{\kappa}k\mu_k=s \Big\}\\
&=\inf\Big\{H(\mu|{\rm Poi}_{pb})\colon \langle \mu, {\rm id}\rangle=a, \langle \mu, {\rm id}|_{\le\kappa}\rangle=s \Big\},
\end{aligned}
$$
where ${\rm id}$ is the identity function on $\N_0$ and ${\rm id}|_{\leq\kappa}(k)=k\1_{[0,\kappa]}(k)$; and we used the notation $\langle \mu, f\rangle$ for the integral of a function $f$ with respect to a measure $\mu$. Now we apply standard variational calculus. Consider a minimizer $\mu$ of the last formula. A standard argument shows that $\mu_k>0$ for any $k$. Fix some compactly supported $\gamma\colon \N_0\to\R$ satisfying $\gamma\bot\1, \gamma\bot \id$ and $\gamma\bot \id|_{\le\kappa}$. Then, for any $\eps\in\R$ with sufficiently small $|\eps|$, the measure $\mu+\eps\gamma$ is admissible. From minimality, we deduce that
$$
0=\partial_\eps|_{\eps=0}H(\mu+\eps\gamma|{\rm Poi}_{pb})=\sum_{k}\Big(\gamma_k\log\frac{\mu_k}{q_k}+\mu_k\frac{\gamma_k}{\mu_k}\Big)=\Big\langle \gamma,\log\frac{\mu}{q}\Big\rangle,
$$
where we put $q_k={\rm Poi}_{pb}(k)$. Hence, $\log \frac\mu q$ is a linear combination of $\1$, $\id$ and $\id|_{\le\kappa}$. That is, there are $A,B,C\in\R$ such that
\begin{equation}\label{min2}
	\mu_k=q_k\e^A\e^{Bk}\times
	\begin{cases} 
	\e^{Ck} &\mbox{for } k\leq \kappa, \\
	1&\mbox{for } k>\kappa,
	\end{cases}\qquad k\in \N_0.
\end{equation}
We note that $A, B$ and $C$ are well-defined functions of $a$ and $s$, since $\1$, $\id$ and $\id|_{\le\kappa}$ are linearly independent.

Now using that $\langle \mu,\1\rangle=1$ and  $\langle \mu, \id\rangle=a$ and $\langle \mu, \id|_{\le\kappa}\rangle=s$, and introducing the notation
\begin{equation}\label{varphidef}
\varphi(B,C):=\log\Big(\sum_{k=0}^{\kappa}q_k\e^{(B+C)k}+\sum_{k>\kappa}q_k\e^{Bk}\Big),\qquad B,C\in\R,
\end{equation}
we see that $B=B(a,s)$ and $C=C(a,s)$ are characterised by
\begin{eqnarray}
a&=&\frac{\underset{k\le\kappa}{\sum}kq_k\e^{(B+C)k}+\underset{k>\kappa}{\sum}kq_k\e^{Bk}}{\underset{k\le\kappa}{\sum}q_k\e^{(B+C)k}+\underset{k>\kappa}{\sum}q_k\e^{Bk}}=\partial_B \varphi(B,C),\label{achar}\\
s&=&\frac{\underset{k\le\kappa}{\sum}kq_k\e^{(B+C)k}}{\underset{k\le\kappa}{\sum}q_k\e^{(B+C)k}+\underset{k>\kappa}{\sum}q_k\e^{Bk}}=\partial_C \varphi(B,C),\label{schar}
\end{eqnarray}
while $A(a,s)=-\varphi(B(a,s),C(a,s))$. Furthermore,
\begin{equation}\label{Iminrident}
\inf_{r} {I}^{\ssup p}_{\rm L,IB}(a,s,r)=\sum_{k}\mu_k\log\frac{\mu_k}{q_k}=Ba+Cs-\varphi(B,C).
\end{equation}
This finishes the characterisation of $\inf_{r} {I}^{\ssup p}_{\rm L,IB}(a,s,r)$ for any fixed $a,s$.

Now we optimise over $a$ with $s$ fixed. We recall that $a_p(s)$ denotes the minimizing $a$ of $\inf_{r} {I}^{\ssup p}_{\rm L,IB}(a,s,r)$. Recalling that $B$ and $C$ are functions of $a$ and $s$, we differentiate \eqref{Iminrident} with respect to $a$ and use it for $a=a_p(s)$ to obtain
\begin{equation}\label{aderivative}
\begin{aligned}
0&=\Big(a_p(s)-\partial_B\varphi(B(a_p(s),s),C(a_p(s),s))\Big)\frac{\d}{\d s}B(a_p(s),s)\\
&\quad +\Big(s-\partial_C\varphi(B(a_p(s),s),C(a_p(s),s))\Big)\frac{\d}{\d s} C(a_p(s),s)+B(a_p(s),s)\\
&=B(a_p(s),s),
\end{aligned}
\end{equation}
also using \eqref{achar} and \eqref{schar}. Differentiating this with respect to $s$ produces
\begin{equation}\label{a1}
a_p'(s)=-\frac{\partial_s B(a_p(s),s)}{\partial_a B(a_p(s),s)}.
\end{equation}
A tedious calculation, starting from differentiating both \eqref{achar} and \eqref{schar} both with respect to $a$ and to $s$, gives, for $B=B(a,s)$ and any $a$ and $s$,
$$
	\partial_aB=\frac{\partial^2_C\varphi}{\partial^2_B\varphi\partial^2_C\varphi-(\partial_C\partial_B\varphi)^2}\qquad \mbox{and}\qquad 
\partial_sB=-\,\frac{\partial_B\partial_C\varphi}{\partial^2_B\varphi\partial^2_C\varphi-(\partial_C\partial_B\varphi)^2}
$$
and hence
\begin{equation}\label{Min5}
a_p'(s)=\frac{\partial_B\partial_C\varphi(B,C)}{\partial_C^2\varphi(B,C)}\qquad\mbox{with }B=B(a_p(s),s)=0\mbox{ and } C=C(a_p(s),s).
\end{equation}
First we show that the denominator is positive:
$$
\begin{aligned}
\partial_C^2\varphi(B,C)&=\frac{\underset{k\le\kappa}{\sum}k^2 q_k\e^{(B+C)k}\big(\underset{k\le\kappa}{\sum}q_k\e^{(B+C)k}+\underset{k>\kappa}{\sum}q_k\e^{Bk}\big)-\big(\underset{k\le\kappa}{\sum}kq_k\e^{(B+C)k}\big)^2}
{\big(\underset{k\le\kappa}{\sum}q_k\e^{(B+C)k}+\underset{k>\kappa}{\sum}q_k\e^{Bk}\big)^2}\\
&\geq \frac{\big(\underset{k\le\kappa}\sum k^2 q_k\e^{(B+C)k}\big)\big(\underset{k\le\kappa}{\sum} q_k\e^{(B+C)k}\big)-\big(\underset{k\le\kappa}{\sum}kq_k\e^{(B+C)k}\big)^2}
{\big(\underset{k\le\kappa}{\sum}q_k\e^{(B+C)k}+\underset{k>\kappa}{\sum}q_k\e^{Bk}\big)^2}>0,\qquad B,C\in\R,
\end{aligned}
$$
as a standard symmetrisation shows. Next we consider the  numerator in \eqref{Min5}:
\begin{equation}\label{expression}
\begin{aligned}
\partial_B\partial_C\varphi(0,C)&=\Big(\sum_{k\le\kappa}q_k\e^{Ck}+\sum_{k>\kappa}q_k\Big)^{-2}\\
&\qquad
\Big[\sum_{k\le\kappa}k^2q_k\e^{Ck}\Big(\sum_{k\le\kappa}q_k\e^{Ck}+\sum_{k>\kappa} q_k\Big)-\Big(\sum_ {k\le\kappa}kq_k\e^{Ck}+\sum_{k>\kappa}kq_k\Big)\sum_{k\le\kappa} kq_k\e^{Ck}\Big].
\end{aligned}
\end{equation}
No we use the facts that $\sum_{k\le\kappa}q_k+\sum_{k>\kappa}q_k=1$ (since $(q_k)_{k\in\N_0}$ is a probability distribution) and $\sum_{k\in\N_0}k q_k=bp=a_p=a_p(s_p)$ (see Corollary \ref{cor-LLNSALOHA}; $(q_k)_{k\in\N_0}=\Poi_{pb}$ has expectation $pb$). Furthermore, note that $C(a_p(s_p),s_p)=0$ by optimality (which can be seen in the same way as the fact that $B(a_p(s),s)=0$ above). Then we get
$$
\begin{aligned}
a_p'(s_p)&=\partial_B\partial_C\varphi(0,0)=\sum_{k\le\kappa}k^2q_k-bp\sum_{k\leq \kappa}kq_k\\
&=bp\e^{-bp}\Big[\sum_{k\leq \kappa-1}(k+1)\frac{(bp)^k}{k!}-bp\sum_{k\leq \kappa-1}\frac{(bp)^k}{k!}\Big]
=p\frac{\d}{\d p}s_p,
\end{aligned}
$$
as we see from \eqref{spderivative}. Recall that $p_*$ is the unique maximizer for $p\mapsto s_p$. According to Lemma \ref{lem-optimalp}, this (and therefore $a_p'(s_p)$)  is positive if $p<p^*$ and negative if $p>p^*$. This implies all assertions of Lemma \ref{lemm3} for $p\not=p^*$.

Now we consider the case $p=p^*$ characterised in \eqref{p*chracterisation}. Here it will not be successful to rely on the characterisation of $a_p(s)$ by $0=B(a_p(s),s)$ and to consider the derivative with respect to $s$ in $s=s_{p_*}$ only, since $\partial_B\partial_C\varphi(0,0)=0$ for $p=p^*$. Instead, we use \eqref{achar} and explicitly look at the difference
	\begin{equation}\label{a2}
	\begin{aligned}
		a_{p^\ast}(s)-a_{p^\ast}(s_{p^\ast})&=
		\partial_B \varphi(0,C)- bp^*
	=\frac{\sum_{k\le\kappa}q_k\e^{Ck}[k-a_{p^\ast}]+\sum_{k>\kappa}q_k[k-a_{p^\ast}]}{\sum_{k\le\kappa}q_k\e^{Ck}+\sum_{k>\kappa}q_k}\\
	&=\frac{\sum_{k\le\kappa}q_k[\e^{Ck}-1][k-a_{p^\ast}]}{\sum_{k\le\kappa}q_k\e^{Ck}+\sum_{k>\kappa}q_k},
\end{aligned}
\end{equation}
with $C=C(a_{p^*}(s),s)$. We used in the last step that $\sum_{k>\kappa}kq_k=a_{p^*}-\sum_{k\le\kappa}kq_k$ and $\sum_{k\in\N_0}q_k=1$. Note that $C<0$ for $s<s_{p^*}$ and $C>0$ for $s>s_{p^*}$. Indeed, a similar calculation as in \eqref{aderivative} shows that 
$$
\frac{\d}{\d s}\inf_{r,a}I^{\ssup{p^*}}_{\rm L,IB}(a,s,r)=\frac{\d}{\d s}\Big[ s C(a_{p^*}(s),s)-\varphi\big(0,C(a_{p^*}(s),s)\big)\Big]=C(a_{p^*}(s),s),\qquad s\in(0,\infty).
$$
Now note that $s_{p^*}$ is defined as the minimizer of the function $s\mapsto \inf_{r,a}I^{\ssup{p^*}}_{\rm L,IB}(a,s,r)$; hence it is decreasing left of the minimal point and increasing right of it. 

Write $g(C)=\sum_{k=0}^\kappa q_k[\e^{Ck}-1][k-a_{p^\ast}]$ for the numerator of the right-hand side of \eqref{a2}. Clearly $g(0)=0$. Recall that $\partial_B\partial_C\varphi(0,0)=0$ hence the derivative of \ref{a2} with respect to $C$ is $0$. Clearly the derivative of \eqref{a2} is $0$ only if $g'(0)=0$. Hence observe that, for any $C<0$,
$$
\begin{aligned}
g'(C)&= \sum_{k=0}^\kappa k q_k \e^{Ck} (k-a_{p_*})< \e^{Ca_{p_*}} \sum_{k\leq a_{p_*}}k q_k(k-a_{p_*})+\e^{Ca_{p_*}}\sum_{k\colon a_{p_*}<k\leq \kappa} k q_k (k-a_{p_*})=0.
\end{aligned}
$$
Hence, $g$ is strictly decreasing in $(-\infty,0]$ and hence positive in $(-\infty,0)$. An analogous argument shows that $g'(C)>0$ for $C>0$: 
$$
g'(C)= \sum_{k=0}^\kappa k q_k \e^{Ck} (k-a_{p_*})> \sum_{k\leq a_{p_*}}k q_k(k-a_{p_*})+\sum_{k\colon a_{p_*}<k\leq \kappa} k q_k (k-a_{p_*})=0.
$$
Hence $g$ is strictly increasing and positive in $(0,\infty)$. This implies that $a_{p_*}(s)>a_{p_*}(s_{p_*})$ for any $s\not=s_{p_*}$ and finishes the proof of the lemma.
\end{proof}

\bigskip

\noindent{\bf Acknowledgment.} The support of the Deutsche Akademische Auslandsdienst (DAAD) via the Project {\em Berlin-AIMS Network in Stochastic Analysis} (Project-ID 57417853) is gratefully acknowledged.

\end{document}